\newtheorem{theorem}{Theorem}[section]           
\newtheorem{corollary}[theorem]{Corollary}       
\newtheorem{lemma}[theorem]{Lemma}               
\newtheorem*{intro_theorem}{Theorem}
\theoremstyle{definition}   
\newtheorem{definition}[theorem]{Definition}     
\newtheorem*{intro_definition}{Definition}
\newtheorem{example}[theorem]{Example}           
\theoremstyle{remark}
\newtheorem{remark}[theorem]{Remark}             
\newcommand{\te}{\textrm}
\DeclareMathOperator{\spt}{spt}
\DeclareMathOperator{\im}{im}
\DeclareMathOperator{\domain}{dmn}
\DeclareMathOperator{\tangent}{Tan}
\DeclareMathOperator{\closure}{Clos}
\DeclareMathOperator{\interior}{Int}
\DeclareMathOperator{\boundary}{Bdry}
\DeclareMathOperator{\dist}{dist}
\DeclareMathOperator{\with}{:} 
\DeclareMathOperator{\without}{\sim}
\newcommand{\ud}{\ensuremath{\,\mathrm{d}}}
\newcommand{\restrict}{\mathop{\llcorner}}
\title{Integral Decompositions of Varifolds}
\author{Hsin-Chuang Chou}
\date{\today}
\begin{document}
\maketitle


\begin{abstract}
   This paper introduces a notion of decompositions of integral varifolds into countably many integral varifolds, and the existence of such decomposition of integral varifolds whose first variation is representable by integration is established. Furthermore, this result can be generalized by replacing the class of integral varifolds by some classes of rectifiable varifolds whose density is uniformly bounded from below. However, all of these decompositions may fail to be unique.
\end{abstract}

\section*{Introduction}

\textbf{General hypothesis.}
In this section, we suppose $m, n, Q$ are positive integers, $U$ is an open subset of $\mathbf{R}^n$, $V \in \mathbf{V}_m(U)$, and $\|\updelta V\|$ is a Radon measure.


\textbf{Motivation.}
To study the regularty of area-minimizing currents in higher codimension, Almgren introduced weakly differentiable $Q$-valued functions in \cite{Almgren2000_MR1777737}, which was extended by De Lellis and Spadaro in an intrinsic way in \cite{DLS11_MR2663735}. To study the theory of PDEs on varifolds, Menne introduced weakly differentiable functions on varifolds in \cite{Menne16a_MR3528825}; in this theory, a central element is the Poincar\'e inequality, a special case of which is the constancy theorem 
\cite[8.34]{Menne16a_MR3528825}: \textit{If the weak derivative vanishes, then the function must be constant on some decomposition of the varifold.} To study the convergence of pairs of varifolds and weakly differentiable functions thereon, or the theory of weakly differentiable multiple-valued functions on integral varifolds, we need a suitable notion of indecomposability of varifolds.


\textbf{Description of results.}
In \cite[6.2]{Menne16a_MR3528825}, the notion of indecomposability of $V$ was introduced by means of the distributional $V$ boundary of sets; more precisely, $V$ is termed \textit{indecomposable} if and only if there exists no $\|V\| + \|\updelta V\|$ measurable set $E$ such that $\|V\|(E) > 0$, $\|V\|(U \without E) > 0$, and the \textit{distributional $V$ boundary $V \partial E$ of $E$} is identically zero. In this case, a plane of multiplicity $2$ is indecomposable. 
On the other hand, since we expect our theory to coincide with the one of Almgren in case that $V$ is a plane of density $Q$, the Poincar\'e inequality \cite[2.12]{DLS11_MR2663735} suggests that $V$ should be decomposable into $Q$ identical planes of density $1$. This leads to the following definition:

\begin{intro_definition}[see Definition \ref{definition:integral indecomposability}]
    Suppose $V \in \mathbf{IV}_m(U)$. Then $V$ is called \textit{integrally indecomposable} if and only if there exists no $W \in \mathbf{IV}_m(U)$ such that $W \leq V$, $W \neq 0$, $V-W \neq 0$, $\|V\| = \|W\| + \|V-W\|$ and $\| \updelta V \| = \| \updelta W \| + \| \updelta (V-W) \|$.
\end{intro_definition}

Roughly speaking, we allow an integral varifold $V$ to be decomposed not only by restriction to subsets of zero distributional $V$ boundary but also by peeling off into several sheets without producing extra boundary.
For instance, if $V \in \mathbf{IV}_2(\mathbf{R}^2)$ satisfies $\|V\| = 2 \mathscr{L}^2$, then $\updelta V = 0$; letting $W = 2^{-1} V$, we see that $V$ is integrally decomposed as $V = W + W$.
It turns out that the two definitions of indecomposability are equivalent if $V \in \mathbf{IV}_m(U)$ has density $1$ for $\|V\|$ almost everywhere and $\|\updelta V\|$ is absolutely continuous with respect to $\|V\|$ (see \ref{lemma:equivalence if} and \ref{lemma:equivalence only if}). The aim of this paper is to show the following existence theorem of integral decomposition:

\begin{intro_theorem}[see Theorem \ref{theorem:integral decomposition}]
    Suppose $m, n \in \mathscr{P}$, $U$ is an open subset of $\mathbf{R}^n$, $V \in \mathbf{IV}_m(U)$, and $\|\updelta V\|$ is a Radon measure. Then, there exist a countable subset $H$ of $\mathbf{IV}_m(U)$ and a function $\xi: H \to \mathscr{P}$ such that $W$ is nonzero and integrally indecomposable whenever $W \in H$, and such that
    \begin{gather*}
        V(k) = \sum_{W \in H} \xi(W) W(k) \quad \te{whenever $k \in \mathscr{K}(U \times \mathbf{G}(n, m))$}, \\
        \te{and} \quad \|\updelta V\|(f) = \sum_{W \in H} \xi(W) \|\updelta W\|(f) \quad \te{whenever $f \in \mathscr{K}(U)$}.
    \end{gather*}
\end{intro_theorem}


\textbf{Organization of this paper.}
In Section \ref{section:notation}, we introduce the notation. 
In Section \ref{section:topology}, to extend the main theorem for non-integral varifolds, we recall the strong topology on the class of all Daniell integrals.
In Section \ref{section:integral indecomposability}, the integral indecomposability of integral varifolds is defined and its relation to the indecomposability employed in \cite[6.2]{Menne16a_MR3528825} is established.
In Section \ref{section:integral decomposition}, we prove the existence theorem of integral decomposition of varifolds.

\textbf{Acknowledgements.}
The author would like to thank his PhD advisor Prof. Ulrich Menne, Dr. Nicolau Sarquis Aiex, and Mr. Yu-Tong Liu for suggestions and consultations. The author was supported by NTNU ``Scholarship Pilot Program of the Ministry of Science and Technology to Subsidize Colleges and Universities in the Cultivation of Outstanding Doctoral Students''.

\section{Notation}
\label{section:notation}

\paragraph{Less common symbols and terminology.}
The difference of sets $A$ and $B$ is denoted by $A \without B$. 
The domain and image of a function $f$ are denoted by $\domain f$ and $\im f$. 
The topological boundary, closure and interior of a set $A$ are denoted by $\boundary A$, $\closure A$, and $\interior A$, respectively. 
If $X$ is a metric space with metric $\rho$, $A \subset X$, and $x \in X$, then the distance of $x$ to $A$ is defined by $\dist(x, A) = \inf \{ \rho(x, a) \with a \in A \}$.
Inner products are denoted by a $\bullet$ (see \cite[1.7.1]{Federer69_MR0257325}). 
The set of positive integer is denoted by $\mathscr{P}$ (see \cite[2.2.6]{Federer69_MR0257325}). 
The open and closed balls with center $a$ and radius $r$ are denoted by $\mathbf{U}(a, r)$ and $\mathbf{B}(a, r)$ (see \cite[2.8.1]{Federer69_MR0257325}). 
For integration, the alternate notations $\int f \ud \mu$, $\int f(x) \ud \mu \, x$ and $\mu(f)$ are employed; in this respect, $\mu$ integrability of $f$ means that $\int f \ud \mu$ is defined in $\overline{\mathbf{R}}$ and $\mu$ summability of $f$ means that $\int f \ud \mu \in \mathbf{R}$ (see \cite[2.4.2]{Federer69_MR0257325}). 
Whenever $A$ is a set and $f$ is an $\overline{\mathbf{R}}$-valued function, $\sum_{x \in A} f(x)$ denotes the numerical sum of $f$ over $A$ (see \cite[2.1.1]{Federer69_MR0257325}) and in case $A = \domain f$, we abbreviate $\sum f = \sum_{x \in A} f(x)$; moreover, if $G$ is a set of such functions $f$, we define $(\sum_{f \in G} f)(x) = \sum_{f \in G} f(x)$ whenever the right hand side is defined for $x$.
Whenever $X$ is a locally compact Hausdorff space, $\mathscr{K}(X)$ denotes the vector space of continuous real-valued functions on $X$ with compact support, and $\mathscr{K}(X)^*$ denotes the vector space of all Daniell integrals on $\mathscr{K}(X)$ (see \cite[2.5.14, 2.5.19]{Federer69_MR0257325}). 
The topology of $\mathscr{K}(X)$ is defined such that $\mathscr{K}(X)^*$ becomes the dual topological vector space to $\mathscr{K}(X)$ (see \cite[2.10, 2.11]{Menne16a_MR3528825}). 
Functions that are $k$ times continuously differentiable and submanifolds defined by such functions are termed ``of class $k$'' (see \cite[3.1.11, 3.1.19]{Federer69_MR0257325}).

Suppose $X$ is a metric space, $\phi$ and $\psi$ are Borel regular measures on $X$ such that every bounded subset of $X$ has finite measure, and we define a Borel regular measure by
\[
    \psi_{\phi}(A) = \inf \{\psi(B) \with \te{$B$ is a Borel set and $\phi(A \without B) = 0$}\}
\]
whenever $A \subset X$. In case $\psi_{\phi} = \psi$, we say $\psi$ is absolutely continuous with respect to $\phi$ (see \cite[2.9.1, 2.9.2]{Federer69_MR0257325}).
In general, the definition of $\psi_\phi$ is also employed while $X$ has an exhaustion by open sets on which $\psi$ and $\phi$ has finite measures.

Whenever $m \in \mathscr{P}$, $\mu$ measures an open subset $U$ of a normed space $X$, $a \in U$, and $\iota \with U \to X$ is the inclusion map, we abbreviate $\tangent^m(\iota_{\#} \mu, a)$ as $\tangent^m( \mu, x)$. 
Whenever $Y$ is a separable Banach space, $\mathscr{D}(U, Y)$ denotes the vector space of  $Y$-valued functions of class $\infty$ with compact support, of which the topology is defined as in \cite[2.13]{Menne16a_MR3528825} and $\mathscr{D}'(U, Y)$ denotes the dual topological vector space to $\mathscr{D}(U, Y)$.
For $T \in \mathscr{D}'(U, Y)$, $\|T\|$ is defined to be the largest Borel regular measure over $U$ such that
\[ \|T\|(G) = \sup \{ T(g) \with g \in \mathscr{D}(U, Y), \spt g \subset G, |g| \leq 1 \} \] 
whenever $G$ is an open subset of $U$ (see \cite[2.18]{Menne16a_MR3528825}); in case $\|T\|$ is a Radon measure, this concept agrees with \cite[4.1.5]{Federer69_MR0257325}, hence we say $T$ is \textit{representable by integration} and $T(g)$ continues to denote the value of the unique $\|T\|_{(1)}$ continuous extension of $T$ to $\mathbf{L}_1(\|T\|, Y)$ at $g \in \mathbf{L}_1(\|T\|, Y)$ and for every $\|T\|$ measurable set $A$, we define $T \restrict A \in \mathscr{D}'(U, Y)$ by $(T \restrict A) (g) = T(g_A)$, where $g_A(x) = g(x)$ for $x \in A$ and $g_A(x) = 0$ for $x \in U \without A$ (see \cite[2.18, 2.19, 2.20]{Menne16a_MR3528825}).

Whenever $n \in \mathscr{P}$, $U$ is an open subset of $\mathbf{R}^n$, and $m$ is a non-negative integer, the space of varifolds, rectifiable varifolds, and integral varifolds of dimension $m$ are denoted by $\mathbf{V}_m(U)$, $\mathbf{RV}_m(U)$ and $\mathbf{IV}_m(U)$, respectively (see \cite[3.1, 3.5]{Allard72_MR307015}); whenever $V \in \mathbf{V}_m(U)$ such that $\|\updelta V\|$ is a Radon measure, there exists an $\mathbf{R}^n$-valued function $\upeta(V, \cdot)$ defined by the requirement that, for $x \in U$,
\[ \upeta(V, x) \bullet u = \lim_{r \to 0+} \frac{ \updelta V (b_{x, r} \cdot u ) }{ \|\updelta V\| \mathbf{B}(x, r) } \quad \te{whenever $u \in \mathbf{R}^n$}, \]
where $b_{x, r}$ is the characteristic function of $\mathbf{B}(x, r)$ on $U$; hence $x \in \domain \upeta(V, \cdot)$ if and only if the above limit exists. This definition adapts \cite[4.3]{Allard72_MR307015} in the spirit of \cite[4.1.5]{Federer69_MR0257325}; in particular, $\upeta(V, \cdot)$ is $\|\updelta V\|$ measurable, $|\upeta(V, \cdot)| = 1$ for $\|\updelta V\|$ almost all $x$, and
\[ \updelta V(g) = \textstyle\int \upeta(V, x) \bullet g(x) \ud \|\updelta V\| \, x \quad \te{for $g \in \mathbf{L}_1(\|\updelta V\|, \mathbf{R}^n)$}. \]
Similarly, we also define a $\|V\|$ measurable $\mathbf{R}^n$-valued function $\mathbf{h}(V, \cdot)$ by the requirement that, for $x \in U$,
\[ \mathbf{h}(V, x) \bullet u = - \lim_{r \to 0+} \frac{ \updelta V (b_{x, r} \cdot u ) }{ \|V\| \mathbf{B}(x, r) } \quad \te{whenever $u \in \mathbf{R}^n$} \]
which satisfies
\[ \updelta V(g) = - \textstyle\int \mathbf{h}(V, x) \bullet g(x) \ud \|V\| \, x + \textstyle\int \upeta(V, x) \bullet g(x) \ud (\|\updelta V\| - \|\updelta V\|_{\|V\|}) \, x\]
whenever $g \in \mathbf{L}_1(\|\updelta V\|, \mathbf{R}^n)$.
If $E$ is $\|V\| + \|\updelta V\|$ measurable, then the distributional $V$ boundary of $E$ is given by 
\[ V \partial E = (\updelta V) \restrict E - \updelta(V \restrict E \times \mathbf{G}(n, m)) \in \mathscr{D}'(U, \mathbf{R}^n) \]
(see \cite[5.1]{Menne16a_MR3528825}).

\section{Topology}
\label{section:topology}

In this section, we present the necessary results about the strong topology.

\begin{definition}[see \protect{\cite[III.14, Example 4]{Bourbaki_TVS_MR910295}}]
    Suppose $X$ is a locally compact Hausdorff space. There exists a unique locally convex topology on $\mathscr{K}(X)^*$ termed \textit{strong topology} such that the sets
    \begin{equation*}
        \mathscr{K}(X)^* \cap \{ \mu \with \te{$|\mu(f)| < r $ for all $f \in B$} \}
    \end{equation*}
    corresponding to $r \in \mathbf{R}$, $r>0$ and bounded subset $B$ of $\mathscr{K}(X)$ give a
    local base at $0$.
\end{definition}


\begin{remark}[see \protect{\cite[III.12, Examples 1, 3]{Bourbaki_TVS_MR910295}} and \protect{\cite[III.23, Corollary 1]{Bourbaki_TVS_MR910295}}]
    The space $\mathscr{K}(X)^*$ equipped with the strong topology is complete.
\end{remark}


\begin{remark}[see \protect{\cite[2.11, 2.12]{Menne16a_MR3528825}} and \protect{\cite[III.5, Proposition 6]{Bourbaki_TVS_MR910295}}]
    Suppose $X$ has a sequence of compact subsets $K_i$ such that $K_i \subset \interior K_{i+1}$ for $i \in \mathscr{P}$ and $X = \bigcup_{i=1}^{\infty}K_i$. Then, the strong topology on $\mathscr{K}(X)^*$ is metrizable; in fact, it is generated by the family of semi-norms, with value
    \begin{equation*}
        \sup \{ \mu(f) \with f \in \mathscr{K}(X), \spt f \subset K_i, |f| \leq 1 \}
    \end{equation*}
    at $\mu \in \mathscr{K}(X)^*$, corresponding to $i \in \mathscr{P}$.
\end{remark}

\section{Integral indecomposability}
\label{section:integral indecomposability}

\begin{definition}
    \label{definition:integral indecomposability}
    Suppose $m, n \in \mathscr{P}$, $m \leq n$, $U$ is an open subset of $\mathbf{R}^n$, $V \in \mathbf{IV}_m(U)$ and $\|\updelta V\|$ is a Radon measure. Then $V$ is called \textit{integrally indecomposable} if and only if there exists no $W \in \mathbf{IV}_m(U)$ such that $W \leq V$, $W \neq 0$, $V-W \neq 0$, $\|V\| = \|W\| + \|V-W\|$ and $\| \updelta V \| = \| \updelta W \| + \| \updelta (V-W) \|$.
\end{definition}


The following basic results on distributions will be convenient for a later computation.


\begin{lemma}
    \label{lemma:extension of Riezs theorem}
    Suppose $n \in \mathscr{P}$, $U$ is an open subset of $\mathbf{R}^n$, $\mu$ is a Radon measure on $U$, $\eta$ is a $\mu$ measurable function with values in $\mathbf{R}^n$
    such that
    \begin{align*}
        { \textstyle\int_K |\eta| \ud \mu }<\infty \quad \te{for every compact subset $K$ of $U$}
    \end{align*}
    and $\Omega$ is the space of  continuous $\mathbf{R}^n$-valued functions with compact support.
    Then, for every open subset $G$ of $U$, we have
    \begin{equation*}
        { \textstyle\int_G |\eta| \ud \mu } = \sup \{ { \textstyle\int \omega(x) \bullet \eta(x) \ud \mu \, x } \with \omega\in\Omega, \spt\omega\subset G, |\omega|\leq 1 \}.
    \end{equation*}
\end{lemma}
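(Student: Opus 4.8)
The plan is to reduce the asserted identity to the classical Riesz-type representation of the total variation of a vector measure, exploiting the separability of $X$ to produce suitable test functions. Write $\nu = \int_U |\eta| \ud \mu$; we must show that the supremum on the right equals $\nu$. The inequality $\geq$ from below is immediate: for any $\omega \in \Omega$ with $\spt \omega \subset U$ and $|\omega| \leq 1$ we have $\int \omega \bullet \eta \ud \mu = \int_U \omega \bullet \eta \ud \mu \leq \int_U |\omega||\eta| \ud \mu \leq \int_U |\eta| \ud \mu$ by the Cauchy–Schwarz inequality pointwise. So the work is entirely in the reverse inequality: constructing, for each $t < \nu$, a competitor $\omega$ with $\int \omega \bullet \eta \ud \mu > t$.

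First I would reduce to the case $\nu < \infty$, since otherwise it suffices to produce competitors with arbitrarily large integral, which follows by exhausting $U$ by relatively compact open subsets $U_j$ with $\closure U_j$ compact and $\closure U_j \subset U$ (possible as $X$ is locally compact and separable, hence $\sigma$-compact on $U$) and applying the finite case on each $U_j$, where $\int_{U_j} |\eta| \ud \mu < \infty$ by hypothesis. So assume $\int_U |\eta| \ud \mu < \infty$. Next, since $\mu$ is Radon and $U$ is the increasing union of such $U_j$, choose $j$ with $\int_{U_j} |\eta| \ud \mu > t$; it is then enough to work inside the relatively compact open set $G = U_j$. On $G$, consider the $\mathbf{R}^n$-valued measure $\lambda = (\eta \restrict G)\,\mu$, which has finite total variation $|\lambda|(G) = \int_G |\eta| \ud \mu$. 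The pointwise-defined Radon–Nikodym-type direction field $\sigma(x) = \eta(x)/|\eta(x)|$ (defined $\mu$-a.e. on $\{|\eta| > 0\}$, set to $0$ elsewhere) is $\mu$ measurable with $|\sigma| \leq 1$ and $\sigma \bullet \eta = |\eta|$ $\mu$-a.e.

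The key step is then to approximate $\sigma$ by a continuous compactly supported $\mathbf{R}^n$-valued function. By Lusin's theorem applied componentwise to the $\mu$ measurable function $\sigma \restrict G$ on the set $G$ of finite $\mu$ measure (finiteness of $\mu(G)$ holds since $G$ is relatively compact and $\mu$ is Radon), for any $\varepsilon > 0$ there is a compact $C \subset G$ with $\mu(G \without C) < \varepsilon$ and $\sigma|_C$ continuous; by the Tietze extension theorem and a cutoff supported in $G$, extend to $\omega_0 \in \Omega$ with $\spt \omega_0 \subset G \subset U$ and $\sup|\omega_0| \leq 1$ (retracting onto the closed unit ball does not increase the sup norm and does not disturb agreement with $\sigma$ on $C$, where $|\sigma| \leq 1$). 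Then
\begin{align*}
\textstyle\int \omega_0 \bullet \eta \ud \mu &= \textstyle\int_C |\eta| \ud \mu + \textstyle\int_{G \without C} \omega_0 \bullet \eta \ud \mu \\
&\geq \textstyle\int_G |\eta| \ud \mu - 2 \textstyle\int_{G \without C} |\eta| \ud \mu.
\end{align*}
Since $\int_{G} |\eta| \ud \mu < \infty$, absolute continuity of the finite measure $A \mapsto \int_A |\eta| \ud \mu$ with respect to $\mu$ lets us choose $\varepsilon$ small enough that $\int_{G \without C} |\eta| \ud \mu$ is as small as we wish, making the right-hand side exceed $t$. This yields the desired competitor and completes the reverse inequality.

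The main obstacle I anticipate is the regularity bookkeeping in the approximation step: ensuring that the Lusin/Tietze construction simultaneously (i) keeps the support inside the open set $U$ rather than merely inside $X$, (ii) preserves the bound $|\omega| \leq 1$ after extension, and (iii) controls the error on $G \without C$ uniformly — all of which hinge on $\mu$ being Radon (so that $\mu(G) < \infty$ for $G$ relatively compact, giving Lusin's theorem) and on the local integrability hypothesis $\int_K |\eta| \ud \mu < \infty$ (so that the error measure is absolutely continuous with finite mass near $G$). The hypotheses that $X$ is locally compact, separable, and metric are exactly what make the exhaustion $U = \bigcup_j U_j$ and the Tietze extension available, so the proof is essentially a careful assembly of these standard facts rather than anything requiring a new idea.
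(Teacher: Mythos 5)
Your proof is correct and follows essentially the same route as the paper, which simply instructs the reader to adapt the proof of \cite[2.4.16(ii)]{Federer69_MR0257325} by approximating the direction field $\eta/|\eta|$ (extended by $0$) with functions in $\Omega$ --- precisely the Lusin--Tietze--cutoff construction you carry out. Your write-up just supplies the details that the paper leaves implicit.
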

\begin{proof}
    Define $T \in \mathscr{D}'(U, \mathbf{R}^n)$ by 
    \[
        T(\omega) = \textstyle\int \omega(x) \bullet \eta(x) \ud \mu \, x
    \]
    whenever $\omega \in \mathscr{D}(U, \mathbf{R}^n)$. Then, $T$ is representable by integration and
    \[
        \|T\|(f) \leq \sup \im |\eta| \mu(f) \quad \te{whenever $f \in \mathscr{K}(X)^+$},
    \]
    hence $\|T\| = \|T\|_{\mu}$. Therefore, it follows from \cite[3.2]{https://doi.org/10.48550/arxiv.2209.05955} that $\|T\| = \phi \restrict |\eta|$.
\end{proof}


\begin{remark}
    \label{remark:interchange restriction and total variation}
    Suppose $m, n \in \mathscr{P}$, $U$ is an open subset of $\mathbf{R}^n$, $T\in\mathscr{D}'(U,\mathbf{R}^m)$ is representable by integration and $E$ is $\|T\|$ measurable. Applying \ref{lemma:extension of Riezs theorem} with $\mu = \|T\| \restrict E$ and $\eta = \xi$ where $\xi$ is as in \cite[4.1.5]{Federer69_MR0257325}, then 
    \begin{equation*}
        \| T \restrict E \| = \|T\| \restrict E.
    \end{equation*}
\end{remark}


The rest of this section contributes to the relation between the integral indecomposability and the distributional boundary of sets.


\begin{lemma}
    \label{lemma:equivalence if}
    Suppose that $m, n \in \mathscr{P}$ with $m \leq n$, $U$ is an open subset of $\mathbf{R}^n$, $V \in \mathbf{V}_m(U)$, $\| \updelta V \|$ is a Radon measure, $E$ is $\| V \| + \| \updelta V \|$ measurable with $V \partial E = 0$, and $W = V \restrict E \times \mathbf{G}(n,m)$. Then, we have  
    \[
        \| \updelta V \| = \| \updelta W \| + \| \updelta (V-W) \|.
    \]
\end{lemma}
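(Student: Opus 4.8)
The plan is to unwind the definition of the distributional $V$ boundary and then reduce the claimed identity to additivity of the Radon measure $\|\updelta V\|$ along the measurable set $E$, using Remark \ref{remark:interchange restriction and total variation} to pass from restrictions of distributions to restrictions of their total variation measures.

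First I would rewrite the hypothesis. By the definition of the distributional $V$ boundary, $V \partial E = (\updelta V) \restrict E - \updelta(V \restrict E \times \mathbf{G}(n,m)) = (\updelta V) \restrict E - \updelta W$, so that $V \partial E = 0$ is equivalent to the identity $\updelta W = (\updelta V) \restrict E$ in $\mathscr{D}'(U, \mathbf{R}^n)$; here $(\updelta V) \restrict E$ is meaningful since $\|\updelta V\|$ being a Radon measure makes $\updelta V$ representable by integration and $E$ is $\|\updelta V\|$ measurable. Next I would treat $V - W$. Since $W = V \restrict E \times \mathbf{G}(n,m)$ and $E$ is measurable, $V - W = V \restrict (U \without E) \times \mathbf{G}(n,m)$ as varifolds, and splitting the domain of integration in the integral representation of $\updelta V$ along $E$ gives $\updelta V = (\updelta V) \restrict E + (\updelta V) \restrict (U \without E)$; combined with linearity of the first variation this yields
\[ \updelta(V - W) = \updelta V - \updelta W = \updelta V - (\updelta V) \restrict E = (\updelta V) \restrict (U \without E) \]
(equivalently, this is the identity $V \partial(U \without E) = -V \partial E = 0$).

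Finally, I would apply Remark \ref{remark:interchange restriction and total variation} to the distribution $\updelta V$, which is representable by integration, and to the $\|\updelta V\|$ measurable sets $E$ and $U \without E$, obtaining
\[ \|\updelta W\| = \|(\updelta V) \restrict E\| = \|\updelta V\| \restrict E, \qquad \|\updelta(V - W)\| = \|(\updelta V) \restrict (U \without E)\| = \|\updelta V\| \restrict (U \without E). \]
Adding these two equalities and using that the Radon measure $\|\updelta V\|$ is additive with respect to the measurable set $E$, i.e. $\|\updelta V\| = \|\updelta V\| \restrict E + \|\updelta V\| \restrict (U \without E)$, gives $\|\updelta W\| + \|\updelta(V - W)\| = \|\updelta V\|$, which is the assertion. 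I do not expect a substantial obstacle here: the only points requiring care are the justification that $\updelta V$ restricts additively along $E$ and its complement and the invocation of Remark \ref{remark:interchange restriction and total variation}, both of which are supplied by the preliminary material, together with the observation that $V - W$ is indeed a varifold because $W \leq V$.
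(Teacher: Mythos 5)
Your proposal is correct and follows essentially the same route as the paper: both unwind $V\partial E=0$ to get $\updelta W=(\updelta V)\restrict E$ and $\updelta(V-W)=(\updelta V)\restrict(U\without E)$ (via $V\partial(U\without E)=-V\partial E$), then apply Remark \ref{remark:interchange restriction and total variation} and the additivity of $\|\updelta V\|$ over $E$ and its complement. No issues.
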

\begin{proof}
    We recall from \cite[5.3]{Menne16a_MR3528825} that 
    \begin{equation*}
        V \partial (U \without E) = - V \partial E = 0,
    \end{equation*}
    hence
    \begin{equation*}
        \updelta W = (\updelta V) \restrict E \quad \te{and} \quad \updelta (V-W) = (\updelta V) \restrict (U \without E).
    \end{equation*}
    Therefore, we have
    \begin{align*}
        \| \updelta V \| 
        &= \| \updelta V \| \restrict E + \| \updelta V \| \restrict (U \without E) \\
        &= \| (\updelta V) \restrict E \| + \| (\updelta V) \restrict (U \without E) \| \\
        &= \| \updelta W \| + \| \updelta (V-W) \|
    \end{align*}
    by the $\| \updelta V \|$ measurability of $E$ and \ref{remark:interchange restriction and total variation}.
\end{proof}


\begin{remark}
    In contrast to \ref{lemma:equivalence if}, it may happen that $W = V \restrict E \times \mathbf{G}(n,m)$ satisfies $\| \updelta V \| = \| \updelta W \| + \| \updelta (V-W) \|$, but $V \partial E \neq 0$. Let
    \begin{gather*}
        R_1 = \{ t(1, 0) \with 0 \leq t \in \mathbf{R} \}, \\
        R_2 = \{ t(\cos \theta, \sin \theta) \with 0 \leq t \leq 2 , t \in \mathbf{R} \}, \\
        R_3 = \{ t(\cos 2\theta, \sin 2\theta) \with 0 \leq t \leq 2, t \in \mathbf{R} \}, \\
        L = \{ (-1, t) \with t \in \mathbf{R} \},
    \end{gather*}
    where $0 < \theta < \pi$ with $\cos \theta = -1/2$ and we can define $V \in \mathbf{IV}_1(\mathbf{R}^2)$ satisfying the following conditions
    \begin{enumerate}
        \item 
            $\|V\| = \mathscr{H}^1 \restrict (R_1 \cup R_2 \cup R_3 \cup L)$.
        \item
            $\|\updelta V\|$ is a Radon measure.
        \item
            $\spt \|\updelta V\| = \{ (-1, 2), (-1, -2) \}$.
    \end{enumerate}
    Let $E = \mathbf{R}^2 \cap \{ (x, y) \with x > -1 \}$. Then, we have $\|\updelta W\| = \|\updelta V\|$ and $\|\updelta (V - W)\|$; however, $(\updelta V) \restrict E = 0$, hence $V \partial E = - \updelta W \neq 0$.
\end{remark}


Next, we will show that if $V \in \mathbf{IV}_m(U)$ and $\|\updelta V\| = \|\updelta V\|_{\|V\|}$, then the converse of \ref{lemma:equivalence if} holds.


\begin{theorem}
    \label{theorem:mean curvature indentity}
    Suppose that $m, n \in \mathscr{P}$, $m \leq n$, $U$ is an open subset of $\mathbf{R}^n$, $V, W \in \mathbf{IV}_m(U)$, $\|\updelta V\| + \|\updelta W\|$ is a Radon measure and 
    \[ A = \{ x \with \te{$\mathbf{\Theta}^m(\|V\|, x) > 0$ and $\mathbf{\Theta}^m(\|W\|, x) > 0$} \}. \]
    Then,
    \begin{equation*}
        \mathbf{h}(V, x) = \mathbf{h}(W, x) \quad \te{for $\mathscr{H}^m$ almost all $x$ in $A$}.
    \end{equation*}
\end{theorem}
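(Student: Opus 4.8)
The plan is to derive this ``locality of generalized mean curvature'' from the second order rectifiability of integral varifolds of locally bounded first variation. First I would record the preliminaries: since $\|\updelta V\|$ and $\|\updelta W\|$ are each dominated by the Radon measure $\|\updelta V\|+\|\updelta W\|$, both are Radon measures, so that $\mathbf{h}(V,\cdot)$, $\mathbf{h}(W,\cdot)$ and the representations of $\updelta V$ and $\updelta W$ from Section \ref{section:notation} are available; and, choosing $m$ rectifiable Borel sets $M_V$, $M_W$ with $\|V\|=\mathbf{\Theta}^m(\|V\|,\cdot)\,\mathscr{H}^m\restrict M_V$ and $\|W\|=\mathbf{\Theta}^m(\|W\|,\cdot)\,\mathscr{H}^m\restrict M_W$ and positive integral densities almost everywhere, the set $A$ differs by an $\mathscr{H}^m$ null set from a Borel subset of $M_V\cap M_W$. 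Hence it suffices to prove the identity $\mathscr{H}^m$ almost everywhere on $A$.

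The first main step would be to identify $\mathbf{h}(V,\cdot)$, on the smooth part of $\spt\|V\|$, with the classical mean curvature. By second order rectifiability there are $m$ dimensional submanifolds $N^V_1,N^V_2,\dots$ of class $2$ with $\|V\|(U\without\bigcup_i N^V_i)=0$, and similarly $N^W_1,N^W_2,\dots$ for $W$. I would then show that $\mathbf{h}(V,x)$ equals the classical mean curvature vector of $N^V_i$ at $x$ for $\mathscr{H}^m$ almost all $x$ in $N^V_i\cap\{\mathbf{\Theta}^m(\|V\|,\cdot)>0\}$: starting from $\updelta V(g)=\int\divergence_{\tangent^m(\|V\|,x)}g(x)\ud\|V\|\,x$ for $g\in\mathscr{D}(U,\mathbf{R}^n)$, splitting $\|V\|$ into the parts carried by the $N^V_i$ and refining by the (integer valued) density, and applying the divergence theorem on each $N^V_i$, each piece contributes a classical mean curvature term together with a term supported on the boundary, relative to $N^V_i$, of a subset of $N^V_i$; by \ref{lemma:extension of Riezs theorem} and \ref{remark:interchange restriction and total variation}, and because the hypothesis that $\|\updelta V\|$ is a Radon measure forces these subsets to have locally finite perimeter in $N^V_i$, the latter terms are carried by $\mathscr{H}^m$ null sets and so do not enter the density of $-\updelta V$ with respect to $\|V\|$. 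The same applies to $W$.

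The second main step is geometric. Up to an $\mathscr{H}^m$ null set $A$ is the union of the sets $N^V_i\cap N^W_j$, so I may fix $i,j$ with $\mathscr{H}^m(N^V_i\cap N^W_j)>0$ and consider an $\mathscr{H}^m$ density point $x_0$ of $N^V_i\cap N^W_j$ that lies in $\{\mathbf{\Theta}^m(\|V\|,\cdot)>0\}\cap\{\mathbf{\Theta}^m(\|W\|,\cdot)>0\}$, at which the conclusion of the first step holds for both $V$ and $W$, and at which $\tangent^m(N^V_i,x_0)=\tangent^m(N^W_j,x_0)=\tangent^m(\mathscr{H}^m\restrict(N^V_i\cap N^W_j),x_0)$. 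Near $x_0$ one writes $N^V_i$ and $N^W_j$ as graphs over this common tangent plane of maps of class $2$ that coincide on a set of positive $\mathscr{L}^m$ measure having $x_0$ as a density point. Since a map of class $2$ that vanishes on a set with $x_0$ as a density point has vanishing value and vanishing first and second derivatives at $x_0$, the submanifolds $N^V_i$ and $N^W_j$ have the same second fundamental form, hence the same classical mean curvature vector, at $x_0$; combined with the first step this yields $\mathbf{h}(V,x_0)=\mathbf{h}(W,x_0)$, as required.

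I expect the main obstacle to be the first step, that is, pinning down the generalized mean curvature of $V$ by the local structure of class $2$ together with the multiplicity; this is precisely where the hypothesis that $\|\updelta V\|$ is a Radon measure has to be exploited, in order to rule out pathological interfaces between the smooth pieces (this identification may in fact be extractable from the existing second order rectifiability theory). By contrast, a direct blow-up argument at $\mathscr{H}^m$ almost every point of $A$ seems less promising, because there the blow-ups are planes and the mean curvature only survives in the next order term, whose analysis would require a quantitative tilt-excess decay estimate not apparently available under the present hypotheses; for this reason the second order rectifiability route seems preferable.
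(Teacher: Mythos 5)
Your proposal takes essentially the same route as the paper: the paper's proof consists of invoking \cite[3.5(1b)]{Allard72_MR307015} and the second-order rectifiability theorem \cite[4.8]{Menne13_MR3023856} to reduce to submanifolds of class $2$, and then concluding by the locality of the second fundamental form at density points of the intersection, which is exactly your second step. Your first step is precisely the content of the cited theorem of Menne (so it need not be re-derived), and it is good that you flag it as extractable from the literature, because the divergence-theorem sketch you give for it would not go through as written --- restricting $V$ to the piece carried by a single $N^V_i$ can destroy local boundedness of the first variation, and the Radon-measure hypothesis on $\|\updelta V\|$ does not by itself force those pieces to have locally finite perimeter in $N^V_i$.
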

\begin{proof}
    In view of \cite[3.5(1b)]{Allard72_MR307015}, the theorem \cite[4.8]{Menne13_MR3023856} reduces the problem to the case of submanifolds of class $2$.
\end{proof}


\begin{remark}
    In case $\|\updelta V\|$ is absolutely continuous with respect to $\|V\|$, we have $V$ is integrally indecomposable if and only if there exists no $W \in \mathbf{IV}_m(U)$ such that $W \leq V$, $W \neq 0$, $V - W \neq 0$, and
    \begin{center}
        $\|\updelta W\|$ is absolutely continuous with respect to $\|W\|$, \\
        $\|\updelta (V - W)\|$ is absolutely continuous with respect to $\|V - W\|$.
    \end{center}
    In fact, from \ref{theorem:mean curvature indentity}, we always have
    \[
        \|\updelta V\|_{\|V\|} = \|\updelta W\|_{\|W\|} + \|\updelta (V-W)\|_{\|V-W\|},
    \]
    and the assertion follows. Therefore, the present definition of indecomposability extends \cite[2.15]{Mondino14_MR3148123} when $\spt \|V\|$ is compact and $\|\updelta V\|$ is absolutely continuous with respect to $\|V\|$.
\end{remark}


\begin{corollary}
    \label{lemma:equivalence only if}
    Suppose that $m, n \in \mathscr{P}$, $m \leq n$, $U$ is an open subset of $\mathbf{R}^n$, $V \in \mathbf{IV}_m(U)$, $\| \updelta V \|$ is a Radon measure, $\|\updelta V\|$ is absolutely continuous with respect to $\|V\|$, $E$ is $\| V \| + \| \updelta V \|$ measurable and $W = V \restrict E \times \mathbf{G}(n,m)$ satisfies $\| \updelta V \| = \| \updelta W \| + \| \updelta (V-W) \|$. Then, $V \partial E = 0$.
\end{corollary}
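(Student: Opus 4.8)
The plan is to show directly that the distributions $\updelta W$ and $(\updelta V)\restrict E$ coincide; since $V\partial E = (\updelta V)\restrict E - \updelta(V\restrict E\times\mathbf{G}(n,m)) = (\updelta V)\restrict E - \updelta W$ by definition, this is exactly the assertion. Both distributions are representable by integration, so it suffices to evaluate them on test functions $g\in\mathscr{D}(U,\mathbf{R}^n)$ and rewrite each side by a first-variation formula; everything hinges on upgrading the mean-curvature identity \ref{theorem:mean curvature indentity} to the two measure-theoretic facts needed below.

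The first step is to verify that $\|\updelta W\|$ is absolutely continuous with respect to $\|W\|$. Here I would combine the hypothesis $\|\updelta V\| = \|\updelta W\| + \|\updelta(V-W)\|$, the identity $\|\updelta V\|_{\|V\|} = \|\updelta W\|_{\|W\|} + \|\updelta(V-W)\|_{\|V-W\|}$ from the remark following \ref{theorem:mean curvature indentity}, and the hypothesis $\|\updelta V\| = \|\updelta V\|_{\|V\|}$. Since $\psi_\phi \leq \psi$ always, and all measures in sight are finite on relatively compact Borel subsets of $U$ (because $\|\updelta V\|$ is Radon and $\|\updelta W\|, \|\updelta(V-W)\| \leq \|\updelta V\|$), comparing the two decompositions of $\|\updelta V\|$ forces $\|\updelta W\| = \|\updelta W\|_{\|W\|}$. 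Consequently the general first-variation formula from the Notation section collapses to $\updelta W(g) = - \int \mathbf{h}(W,x)\bullet g(x) \ud \|W\| \, x$ for $g \in \mathscr{D}(U,\mathbf{R}^n)$.

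The second step is to identify the mean curvatures. Since $\|\updelta W\| \leq \|\updelta V\|$, the measure $\|\updelta V\| + \|\updelta W\|$ is Radon, so \ref{theorem:mean curvature indentity} gives $\mathbf{h}(V,x) = \mathbf{h}(W,x)$ for $\mathscr{H}^m$ almost all $x$ in $A = \{x \with \mathbf{\Theta}^m(\|V\|,x)>0 \te{ and } \mathbf{\Theta}^m(\|W\|,x)>0\}$. Because $V, W\in\mathbf{IV}_m(U)$ and $\|W\| = \|V\|\restrict E \leq \|V\|$, for $\|W\|$ almost all $x$ the density $\mathbf{\Theta}^m(\|W\|,x)$ is a positive integer and $\mathbf{\Theta}^m(\|V\|,x) \geq \mathbf{\Theta}^m(\|W\|,x) > 0$, so $\|W\|(U\without A)=0$; and since $W$ is rectifiable, $\|W\|$ is absolutely continuous with respect to $\mathscr{H}^m$, so the $\mathscr{H}^m$-almost-everywhere identity on $A$ yields $\mathbf{h}(V,\cdot) = \mathbf{h}(W,\cdot)$ for $\|W\|$ almost all $x$. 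Substituting this into the formula of Step~1 and using $\|W\| = \|V\|\restrict E$ together with the absolute continuity of $\|\updelta V\|$ with respect to $\|V\|$ gives $\updelta W(g) = - \int \mathbf{h}(V,x)\bullet g_E(x) \ud\|V\| \, x = \updelta V(g_E) = ((\updelta V)\restrict E)(g)$, whence $V\partial E = 0$.

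The only genuine obstacle I anticipate is the bookkeeping in Step~1: one must argue that equality of the two decompositions $\|\updelta W\| + \|\updelta(V-W)\|$ and $\|\updelta W\|_{\|W\|} + \|\updelta(V-W)\|_{\|V-W\|}$ of $\|\updelta V\|$, combined with the termwise inequalities coming from $\psi_\phi \leq \psi$, really forces termwise equality; this is where the Radon hypothesis (finiteness on compacta) is essential, since one cannot in general cancel infinite measures. A secondary and more routine point is the transfer in Step~2 of an $\mathscr{H}^m$-almost-everywhere statement to a $\|W\|$-almost-everywhere statement, which uses that $W$ is rectifiable with density bounded below by $1$.
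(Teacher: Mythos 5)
Your proof is correct and follows essentially the same route as the paper's: both derive $\|\updelta W\| = \|\updelta W\|_{\|W\|}$ by comparing the hypothesis $\|\updelta V\| = \|\updelta W\| + \|\updelta(V-W)\|$ with the identity $\|\updelta V\|_{\|V\|} = \|\updelta W\|_{\|W\|} + \|\updelta(V-W)\|_{\|V-W\|}$ obtained from Theorem \ref{theorem:mean curvature indentity}, and then identify $\updelta W(g)$ with $((\updelta V)\restrict E)(g)$ on test functions via $\mathbf{h}(V,\cdot) = \mathbf{h}(W,\cdot)$. Your additional care about the termwise cancellation (using finiteness on compacta) and the transfer from $\mathscr{H}^m$-almost-everywhere to $\|W\|$-almost-everywhere only makes explicit what the paper leaves implicit.
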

\begin{proof}
    In view of \ref{theorem:mean curvature indentity}, we have
    \begin{equation*}
        \|\updelta V\|_{\|V\|} = \|\updelta W\|_{\|W\|} + \|\updelta (V-W)\|_{\|V-W\|},
    \end{equation*}
    and hence
    \begin{equation*}
        \| \updelta W \| - \|\updelta W\|_{\|W\|} = 0 = \| \updelta (V-W) \| - \|\updelta (V-W)\|_{\|V-W\|}.
    \end{equation*}
    Therefore, whenever $g \in \mathscr{D}(U, \mathbf{R}^n)$, we have
    \begin{align*}
        ((\updelta V) \restrict E)(g) 
        &= -{\textstyle\int_U \mathbf{h}(V, x) \bullet g(x) \ud \|W\| \, x}\\
        &= -{\textstyle\int_U \mathbf{h}(W, x) \bullet g(x) \ud \|W\| \, x}\\
        &= \updelta W(g)
    \end{align*}
    This shows $V \partial E = 0$.
\end{proof}

\section{Integral decomposition}
\label{section:integral decomposition}

\begin{definition}
    \label{definition:appropriate}
    Suppose $m, n \in \mathscr{P}$, $m \leq n$, $U$ is an open subset of $\mathbf{R}^n$ and $P \subset \mathbf{RV}_m(U)$. Then $P$ is called \textit{appropriate} if and only if
    \begin{enumerate}
        \item
            If $V, W \in P$, then $V+W \in P$.
        \item
            If $V \in P$, then $\mathbf{\Theta}^m(\|V\|, x) \geq 1$ for $\|V\|$ almost all $x$.
        \item 
            $P$ is closed with respect to the strong topology.
    \end{enumerate}
\end{definition}


Now, we aim to provide examples of appropriate classes; for this purpose, the following lemma is a powerful tool to verify the closedness of a class with respect to the strong topology.


\begin{lemma}
    \label{lemma:absolute continuity}
    Suppose $X$ is a locally compact Hausdorff space which is a countable union of compact subsets, $\mu_i \in \mathscr{K}(X)^*$ is monotone for each $i \in \mathscr{P}$, and $\mu_i \to \mu \in \mathscr{K}(X)^*$ with respect to the strong topology. Then
    \begin{equation*}
        \lim_{i \to \infty} \textstyle\int f \ud \mu_i = \textstyle\int f \ud \mu
    \end{equation*}
    whenever $f : X \to \mathbf{R}$ is bounded, the set $X \cap \{ x \with f(x) \neq 0 \}$ is contained in a compact subset of $U$, and $f$ is $\mu_i + \mu$ measurable for $i \in \mathscr{P}$, where $\mu_i$ and $\mu$ are identified with the associated $\mathscr{K}(X)$ regular measures. Moreover, if $\nu \in \mathscr{K}(X)^*$ is monotone and $\mu_i$ is absolutely continuous with respect to $\nu$ for $i \in \mathscr{P}$, then $\mu$ is absolutely continuous with respect to $\nu$.
\end{lemma}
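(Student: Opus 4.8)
The plan is to treat the two assertions separately, with the first (convergence of integrals) doing the heavy lifting. Throughout, I identify each $\mu_i$ and $\mu$ with the $\mathscr{K}(X)$ regular Borel measure it induces via the Riesz representation, noting that monotonicity of $\mu_i \in \mathscr{K}(X)^*$ means precisely that this measure is non-negative. Write the target set as $S = X \cap \{x \with f(x) \neq 0\}$ and fix a compact $K \subset X$ with $S \subset K$; by local compactness and the hypothesis that $X$ is $\sigma$-compact, I may enlarge $K$ slightly and find $\varphi \in \mathscr{K}(X)$ with $0 \leq \varphi \leq 1$ and $\varphi = 1$ on a neighbourhood of $K$. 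The first step is to record the basic estimate: strong convergence $\mu_i \to \mu$ together with the fact that $\{r\varphi \with 0 \leq r \leq 1\} \cup \{\varphi\}$ lies in a bounded subset of $\mathscr{K}(X)$ gives $\mu_i(\varphi) \to \mu(\varphi)$, so $\sup_i \mu_i(K) \leq \sup_i \mu_i(\varphi) < \infty$; call this bound $C$. This uniform mass bound on $K$ is what makes the measurability and boundedness hypotheses on $f$ usable.

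Next I would prove the convergence first for $f = \mathbf{1}_B$ with $B$ a bounded Borel set, then bootstrap to general $f$. For a Borel set $B$ contained in $K$, choose by outer/inner regularity of each of the countably many measures $\mu_i, \mu$ simultaneously an open set $O$ and a compact set $L$ with $L \subset B \subset O \subset \interior K$ such that $\mu_i(O \without L) < \varepsilon$ and $\mu(O \without L) < \varepsilon$ for the finitely many indices that matter — more carefully, one uses a diagonal/tail argument: pick $O \supset B$ open with $\mu(O \without B) < \varepsilon$, pick $L \subset B$ compact with $\mu(B \without L) < \varepsilon$, then interpose $\psi, \chi \in \mathscr{K}(X)$ with $\mathbf{1}_L \leq \chi \leq \mathbf{1}_O$ and $\mathbf{1}_L \leq \psi \leq \mathbf{1}_O$; strong convergence yields $\mu_i(\chi) \to \mu(\chi)$, and sandwiching $\mu_i(B)$ between $\mu_i(\psi)$-type quantities together with the uniform bound $C$ controls the error coming from $O \without L$ uniformly in $i$ once $\varepsilon$ is chosen relative to $C$. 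Letting $\varepsilon \to 0$ gives $\mu_i(B) \to \mu(B)$. By linearity this extends to $\mu_i$-simple functions supported in $K$, and then to a bounded $\mu_i + \mu$ measurable $f$ supported in $K$ by uniform approximation: given $\delta > 0$, take a simple function $s$ with $\|f - s\|_\infty < \delta$ and $\spt s \subset K$, measurable with respect to all the $\mu_i + \mu$; then $|\mu_i(f) - \mu_i(s)| \leq \delta C$ for all $i$ and $|\mu(f) - \mu(s)| \leq \delta C$, and since $\mu_i(s) \to \mu(s)$ we conclude $\mu_i(f) \to \mu(f)$.

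For the moreover part, suppose $\nu \in \mathscr{K}(X)^*$ is monotone and each $\mu_i$ is absolutely continuous with respect to $\nu$, and let $A$ be a Borel set with $\nu(A) = 0$; I must show $\mu(A) = 0$. Since $X$ is a countable union of compact sets it suffices to show $\mu(A \cap K) = 0$ for each compact $K$, and $A \cap K$ is a bounded Borel set, hence $\mathbf{1}_{A \cap K}$ falls under the first part (it is bounded, compactly supported, and Borel hence $\mu_i + \mu$ measurable). Therefore $\mu(A \cap K) = \lim_i \mu_i(A \cap K) = 0$ because $\nu(A \cap K) = 0$ forces $\mu_i(A \cap K) = 0$ for every $i$ by absolute continuity; this is exactly the definition of $\mu_{\nu} = \mu$, i.e. $\mu$ absolutely continuous with respect to $\nu$.

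The main obstacle is the uniformity in $i$ in the regularity step: strong convergence only gives convergence on a fixed bounded subset of $\mathscr{K}(X)$, so I must take care that the open-compact sandwich sets $O, L$ and the bump functions interposed between them are chosen \emph{before} passing to the limit in $i$, using the single uniform mass bound $C$ on $K$ rather than per-index regularity estimates. Everything else — linearity, the uniform approximation of $f$ by simple functions, and the $\sigma$-compact reduction in the moreover part — is routine once this uniform control is in place.
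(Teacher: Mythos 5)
There is a genuine gap in the first part: your argument only ever uses the consequence $\mu_i(\varphi)\to\mu(\varphi)$ for a \emph{fixed} $\varphi\in\mathscr{K}(X)$ (i.e.\ weak-$*$ convergence) plus the uniform mass bound $C=\sup_i\mu_i(K)$, and that is not enough. In the sandwich step you need $\mu_i(O\without L)$ to be small \emph{uniformly in $i$}, and you assert that ``the uniform bound $C$'' controls this; it does not --- $C$ bounds the total mass of $\mu_i$ on $K$, not its mass on the thin set $O\without L$, and the regularity estimates $\mu(O\without B)<\varepsilon$, $\mu(B\without L)<\varepsilon$ concern the limit measure only. The standard counterexample to your intermediate claim under weak-$*$ convergence is $\mu_i=\delta_{1/i}$, $\mu=\delta_0$, $B=\{0\}$: then $\mu_i(B)=0$ for all $i$ while $\mu(B)=1$, and no choice of open $O\supset L=\{0\}$ makes $\mu_i(O\without L)=1$ small. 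The ``diagonal/tail argument \dots for the finitely many indices that matter'' is where the circularity hides: for the tail indices you would already need the convergence you are trying to prove.

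What rescues the lemma --- and what the paper's proof uses --- is that the strong topology is genuinely stronger than weak-$*$: for $G$ open with compact closure, the set $\{g\in\mathscr{K}(X)\with\spt g\subset G,\ |g|\leq 1\}$ is bounded in $\mathscr{K}(X)$, so strong convergence gives $|\mu-\mu_i|(G)\to 0$, where $|\mu-\mu_i|=(\mu-\mu_i)^++(\mu-\mu_i)^-$ is the total variation of the signed Daniell integral $\mu-\mu_i$ (in particular $\delta_{1/i}\not\to\delta_0$ strongly, so the counterexample above is excluded by hypothesis, not by your argument). Once this is observed, no regularity or simple-function approximation is needed: by the Jordan decomposition \cite[2.5.6]{Federer69_MR0257325} one has $f\in\mathbf{L}_1(\mu+\mu_i)\subset\mathbf{L}_1(|\mu-\mu_i|)$ and
\[
|\mu(f)-\mu_i(f)|\leq|\mu-\mu_i|(|f|)\leq(\sup|f|)\,|\mu-\mu_i|(G)\to 0
\]
for any bounded $\mu_i+\mu$ measurable $f$ vanishing off a compact subset of $G$. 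Your treatment of the ``moreover'' part is essentially the paper's and is fine once the main assertion is in place, except that the paper passes to a $\mu+\mu_i$ measurable hull $B\supset A$ with $\mu_i(B)=0$ so as to handle arbitrary $\nu$ null sets rather than only Borel ones.
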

\begin{proof}
    We abbreviate $|\nu| = \nu^+ + \nu^-$ for $\nu \in \mathscr{K}(X)^*$. In view of \cite[2.5.6]{Federer69_MR0257325}, for $i \in \mathscr{P}$, we have $\mathbf{L}_1(\mu + \mu_i) \subset \mathbf{L}_1(|\mu - \mu_i|)$ and
    \[ 
        \textstyle\int g \ud (\mu - \mu_i)^+ - \textstyle\int g \ud (\mu - \mu_i)^- = \mu(g) - \mu_i(g) \quad \te{for $g \in \mathbf{L}_1(\mu + \mu_i)$}. 
    \]
    In particular, we see
    \[ 
        \mu(f) - \mu_i(f) = \textstyle\int f \ud (\mu - \mu_i)^+ - \textstyle\int f \ud (\mu - \mu_i)^- \leq |\mu - \mu_i|(|f|) 
    \]
    and the main assertion follows from the strong convergence of $\mu_i$. For the postscript, suppose $A \subset X$ satisfies $\nu(A) = 0$ and choose by \cite[2.5.3]{Federer69_MR0257325} a $\mu + \mu_i$ measurable set $B$ such that $A \subset B$ and $\mu_i(B) = 0$ for $i \in \mathscr{P}$. From the main assertion, we infer $\mu(B) = 0$.
\end{proof}


\begin{lemma}
    \label{lemma:closedness}
    Suppose $m, n \in \mathscr{P}$, $m \leq n$ and $U$ is an open subset of $\mathbf{R}^n$. Then $\mathbf{RV}_m(U)$ and $\mathbf{IV}_m(U)$ are closed subsets with respect to the strong topology on $\mathbf{V}_m(U)$.
\end{lemma}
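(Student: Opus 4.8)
The plan is to reduce both assertions to known characterizations of rectifiable and integral varifolds in terms of mass ratios and densities, and then to exploit the convergence of masses guaranteed by Lemma \ref{lemma:absolute continuity}. First I would recall that, since $U$ is an open subset of $\mathbf{R}^n$, the product $U \times \mathbf{G}(n, m)$ is a locally compact Hausdorff space that is a countable union of compact subsets, so strong convergence $V_i \to V$ in $\mathbf{V}_m(U) = \mathscr{K}(U \times \mathbf{G}(n, m))^*$ is exactly weak-$*$ convergence tested against $\mathscr{K}(U \times \mathbf{G}(n, m))$; in particular $\|V_i\| \to \|V\|$ in $\mathscr{K}(U)^*$, and since each $\|V_i\|$ is monotone, Lemma \ref{lemma:absolute continuity} applies with $X = U$. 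The key point is that Lemma \ref{lemma:absolute continuity} upgrades the weak-$*$ convergence of the $\|V_i\|$ to convergence of $\|V_i\|(f)$ for bounded, compactly-supported, $(\|V_i\| + \|V\|)$-measurable $f$ — in particular for indicator functions of Borel sets with compact closure — which is precisely what is needed to pass density and rectifiability properties to the limit.

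For rectifiability, I would use the characterization of rectifiable varifolds as those $V \in \mathbf{V}_m(U)$ such that $\|V\|$ is $m$-rectifiable together with the representation of $V$ via the approximate tangent planes of $\|V\|$ (see \cite[3.5]{Allard72_MR307015}); equivalently, one may use the characterization in terms of the measure $\|V\|$ being carried by a countably $m$-rectifiable set and $V$ being the associated varifold with the canonical orientation. The closedness of $\mathbf{RV}_m(U)$ under weak-$*$ convergence is classical; here the strong topology coincides with the weak-$*$ topology on the relevant space, so I would cite or reprove the standard rectifiability-closure argument: given a strongly convergent sequence $V_i \in \mathbf{RV}_m(U)$ with limit $V$, one tests against functions of the form $f(x) \phi(S)$ to see that the Radon measure $\|V\|$ has positive lower density $m$-a.e.\ where it lives, invokes the rectifiability criterion via densities, and identifies the tangent-plane part of $V$ as the canonical one. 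The convergence of masses from Lemma \ref{lemma:absolute continuity} is what makes the density bounds pass to the limit.

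For integrality, I would invoke Allard's integral compactness theorem \cite[6.4]{Allard72_MR307015} — or rather the portion of it asserting that the limit of a weakly-$*$ convergent sequence of integral varifolds is again integral, \emph{provided} the limit is rectifiable and has locally bounded first variation. However, in the present statement there is no first-variation hypothesis, so I expect the main obstacle to be handling integrality in full generality: the correct tool is instead the characterization of integral varifolds purely in terms of the approximate tangent planes and integer-valued densities of $\|V\|$ (no bound on $\|\updelta V\|$ needed), namely that $V \in \mathbf{IV}_m(U)$ iff $V \in \mathbf{RV}_m(U)$ and $\mathbf{\Theta}^m(\|V\|, x) \in \mathscr{P}$ for $\|V\|$ almost all $x$. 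Granting rectifiability of the limit from the previous paragraph, it remains to show the density of $\|V\|$ is integer-valued $\|V\|$-a.e. Here I would argue by a Besicovitch-type blow-up at $\|V\|$-typical points: at such a point $x$, rescalings of $V$ converge to a multiplicity-$\mathbf{\Theta}^m(\|V\|,x)$ plane, while $V$ itself is a strong limit of the $V_i$; a diagonal argument combined with the integrality of each $V_i$ and the convergence of localized masses (again Lemma \ref{lemma:absolute continuity}, applied to small balls) forces $\mathbf{\Theta}^m(\|V\|, x)$ to be a limit of integers, hence an integer. The delicate part is making the diagonal extraction uniform enough that the density values are genuinely integers and not merely limits of integers that could fail to be integral — this is handled by the rigidity of tangent cones to rectifiable varifolds, so one only needs integrality of $\mathbf{\Theta}^m(\|V_i\|, \cdot)$ along a sequence of scales shrinking to $x$, which is exactly what the hypothesis $V_i \in \mathbf{IV}_m(U)$ supplies.
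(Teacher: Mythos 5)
Your proposal rests on the identification of the strong topology with the weak-$*$ topology, and that identification is false; this is the decisive gap. The strong topology on $\mathscr{K}(X)^*$ is the topology of uniform convergence on \emph{bounded} subsets of $\mathscr{K}(X)$; as recorded in Section \ref{section:topology}, on $\mathbf{V}_m(U)$ it is generated by the seminorms $\mu \mapsto \sup\{\mu(f) : \spt f \subset K_i,\ |f| \leq 1\}$, i.e.\ it is local convergence in total variation, which is strictly stronger than weak-$*$ convergence. The lemma is simply false for the weak-$*$ topology: neither $\mathbf{RV}_m(U)$ nor $\mathbf{IV}_m(U)$ is weak-$*$ closed (finely oscillating multiplicity-one curves converge weakly-$*$ to varifolds with diffuse tangential part or non-integer density --- this is exactly why Allard's compactness theorem needs density and first-variation bounds). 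So the ``classical closedness of $\mathbf{RV}_m(U)$ under weak-$*$ convergence'' you invoke does not exist, and your blow-up/diagonal argument for integrality, which uses only weak-$*$ information plus convergence of localized masses, cannot close either: weak-$*$ convergence does not propagate densities to the limit.

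What actually makes the lemma true is the total-variation nature of the strong topology, and the paper's proof uses it in two specific ways that your proposal is missing. For rectifiability, it forms the auxiliary rectifiable varifold $W = \sum_{i=1}^{\infty} 2^{-i} V_i$, uses Lemma \ref{lemma:absolute continuity} to show that $\|V\|$ is absolutely continuous with respect to $\|W\|$ and gives no mass to $\{ x \with \mathbf{\Theta}^{*m}(\|V\|, x) = \infty \}$, and identifies $V(f) = \int f(x, \tangent^m(\|W\|, x)) \ud \|V\| \, x$ in the limit; this yields $V \in \mathbf{RV}_m(U)$ without any density lower bound or blow-up. For integrality, it applies Lemma \ref{lemma:extension of Riezs theorem} with $\mu = \mathscr{H}^m$ restricted to the relevant set and $\eta = \mathbf{\Theta}^m(\|V\|, \cdot) - \mathbf{\Theta}^m(\|V_i\|, \cdot)$ to convert the strong (total-variation) convergence of $\|V_i\|$ into $\mathbf{L}_1$ convergence of the density functions on compact subsets, whence $\mathbf{\Theta}^m(\|V\|, x)$ is an $\mathscr{H}^m$ almost everywhere subsequential limit of the integer-valued $\mathbf{\Theta}^m(\|V_i\|, x)$ and is therefore an integer. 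If you replace your weak-$*$ framework with this total-variation reading of the hypothesis, both halves of your outline can be repaired along these lines.
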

\begin{proof}
    Suppose $V_i \in \mathbf{RV}_m(U)$, $V \in \mathbf{V}_m(U)$, $V_i \to V$ with respect to the strong topology. Note that $\|V_i\| \to \|V\|$ with respect to the strong topology because $|p_{\#}(V - V_i)| \leq p_{\#}|V- V_i|$, where $p: U \times \mathbf{G}(n, m) \to U$ is the projection map and $|\nu| = \nu^+ + \nu^-$ whenever $\nu$ is a Daniell integral. From \cite[3.5(2)]{Allard72_MR307015}, we define a rectifiable varifold by
    \[
        W = \sum_{i=1}^{\infty}2^{-i}V_i \in \mathbf{RV}_m(U)
    \]
    with respect to the weak topology and for $i \in \mathscr{P}$, 
    \[
        \tangent^m(\|V_i\|, x) = \tangent^m(\|W\|, x) \quad \te{for $\|V_i\|$ almost all $x$},
    \]
    hence by \ref{lemma:absolute continuity},
    \begin{equation*}
        V(f) = \lim_{i \to \infty} \textstyle\int f(x, \tangent^m(\|V_i\|, x)) \ud \|V_i\| \, x = \textstyle\int f(x, \tangent^m(\|W\|, x)) \ud \|V\| \, x
    \end{equation*}
    whenever $f \in \mathscr{K}(U \times \mathbf{G}(n, m))$. Since $\|V\|$ is a Radon measure, we have
    \begin{equation*}
        \mathbf{\Theta}^{*m}(\|V\|, x) < \infty \quad \te{for $\mathscr{H}^m$ almost all $x$}
    \end{equation*}
    by \cite[2.10.19(3)]{Federer69_MR0257325}; thus, $\{ x \with \mathbf{\Theta}^{*m}(\|V\|, x) = \infty \}$ is a Borel set satisfying $\|V_i\| \{ x \with \mathbf{\Theta}^{*m}(\|V\|, x) = \infty \} = 0$ for $i \in \mathscr{P}$. In view of \ref{lemma:absolute continuity} and \cite[3.25]{https://doi.org/10.48550/arxiv.2206.14046}, we have $\|V\| \{ x \with \mathbf{\Theta}^{*m}(\|V\|, x) = \infty \} = 0$ and  $\|V\|$ is absolutely continuous with respect to $\|W\|$, hence $\|V\|$ is the weight of some member of $\mathbf{RV}_m(U)$ and $\tangent^m(\|V\|, x) = \tangent^m(\|W\|, x)$ for $\|V\|$ almost all $x$. Thus, 
    \begin{equation*}
        V(f) = \textstyle\int f(x, \tangent^m(\|V\|, x)) \ud \|V\| \, x
    \end{equation*}
    whenever $f \in \mathscr{K}(U \times \mathbf{G}(n, m))$.
    
    Finally, for every open subset $G$ of $U$ such that $\closure G$ is a compact subset of $U$, applying \ref{lemma:extension of Riezs theorem} with $\mu = \mathscr{H}^m \restrict \{ x \with \mathbf{\Theta}^m(\|V\|, x) + \mathbf{\Theta}^m(\|V_i\|, x) > 0 \}$ and $\eta = \mathbf{\Theta}^m(\|V\|, \cdot) - \mathbf{\Theta}^m(\|V_i\|, \cdot)$, we have
    \begin{align*}
        \textstyle\int_G |\mathbf{\Theta}^m(\|V\|, x) &- \mathbf{\Theta}^m(\|V_i\|, x)| \ud \mathscr{H}^m \, x \\
        &= \sup \{ \|V\|(g) - \|V_i\|(g) \with g \in \mathscr{K}(U), \spt g \subset G, |g| \leq 1 \}
    \end{align*}
    tends to $0$ as $i \to \infty$ because of the strong convergence of $\|V_i\|$. Therefore, $\mathbf{\Theta}^m(\|V\|, x)$ is the limit of a subsequence of $\mathbf{\Theta}^m(\|V_i\|, x)$ for $\mathscr{H}^m$ almost all $x$. In particular, if $V_i$ is integral for $i \in \mathscr{P}$, then so is $V$.
\end{proof}


\begin{example}
    Suppose $C$ is a closed subset of $\mathbf{R}$ satisfying $\inf C \geq 1$ and 
    \[
        c + d \in C \quad \te{whenever $c , d \in C$}.
    \]
    In view of \ref{lemma:closedness}, the class
    \[ 
        \mathbf{RV}_m(U) \cap \{ V \with \te{$\mathbf{\Theta}^m(\|V\|, x) \in C$ for $\|V\|$ almost all $x$} \} 
    \]
    is appropriate. For instance, such $C$ could be $\mathscr{P}$, $\mathbf{R} \cap \{ t \with 1 \leq t \}$, or
    \[
        \{ 1 \} \cup (\mathbf{R} \cap \{ t \with 2 \leq t \});
    \]
    for the last one, see also \cite{https://doi.org/10.48550/arxiv.2204.06491}.
\end{example}


\begin{definition}
    \label{definition:integral indecomposability with respect to appropriate classes}
    Suppose $m, n \in \mathscr{P}$, $m \leq n$, $U$ is an open subset of $\mathbf{R}^n$, $P$ is an appropriate subset of $\mathbf{RV}_m(U)$, $V \in P$ and $\|\updelta V\|$ is a Radon measure. Then $V$ is called \textit{indecomposable with respect to $P$} if and only if there exists no $W \in P \without \{0\}$ such that $V-W \in P \without \{0\}$ and $\| \updelta V \| = \| \updelta W \| + \| \updelta (V-W) \|$.
\end{definition}


\begin{definition}
    Suppose $m, n \in \mathscr{P}$, $m \leq n$, $U$ is an open subset of $\mathbf{R}^n$, $P$ is an appropriate subset of $\mathbf{RV}_m(U)$, $W, V \in P$ and $\|\updelta V\|$ is a Radon measure. Then $W$ is called a \textit{component of $V$ with respect to $P$} if and only if $W \neq 0$, $W \leq V$, $\|\updelta V\| = \|\updelta W\| + \|\updelta (V-W)\|$ and $W$ is indecomposable with respect to $P$.
\end{definition}


\begin{definition}
    \label{definition:decomposition}
    Suppose $m, n \in \mathscr{P}$, $m \leq n$, $U$ is an open subset of $\mathbf{R}^n$, $P$ is an appropriate subset of $\mathbf{RV}_m(U)$, $V \in P$ and $\|\updelta V\|$ is a Radon measure. Then a countable subfamily $H$ of $P$ together with a map $\xi \with H \to \mathscr{P}$ is called a \textit{decomposition of $V$ with respect to $P$} if and only if
    \begin{enumerate}
        \item 
            \label{definition:decomposition 1}
            Each member of $H$ is a component of $V$ with respect to $P$.
        \item
            \label{definition:decomposition 2}
            $\sum_{W \in H} \xi(W) W(k) = V(k)$ whenever $k \in \mathscr{K}(U \times \mathbf{G}(n, m))$.
        \item
            \label{definition:decomposition 3}
            $\sum_{W \in H} \xi(W) \| \updelta W \|(f) = \| \updelta V \|(f)$ whenever $f \in \mathscr{K}(U)$.
    \end{enumerate}
\end{definition}


\begin{example}
    Let $0 < \theta < \pi$ be such that $\cos \theta = 1/4$. Consider the six rays
    \begin{align*}
        R_1 &= \{ t (1, 0) \with 0 < t < \infty \}, \\
        R_2 &= \{ t (\cos \theta, \sin \theta) \with 0 < t < \infty \}, \\
        R_3 &= \{ t (\cos (\pi - \theta), \sin \theta) \with 0 < t < \infty \}, \\
        R_4 &= \{ t (-1, 0) \with 0 < t < \infty \}, \\
        R_5 &= \{ t (\cos (\pi - \theta), -\sin \theta) \with 0 < t < \infty \}, \\
        R_6 &= \{ t (\cos \theta, -\sin \theta) \with 0 < t < \infty \},
    \end{align*}
    in $\mathbf{R}^2$ and the associated varifolds $V_i \in \mathbf{IV}_1(\mathbf{R}^n)$ with $\|V_i\| = \mathscr{H}^1 \restrict R_i$. Note that the integral varifold defined by
    \[
        V = 2 (V_1 + V_2 + V_3 + V_4 + V_5 + V_6)
    \]
    is stationary. Let 
    \begin{gather*}
        H_1 = \{ V_1 + V_4, V_2 + V_5, V_3 + V_6 \}, \\
        H_2 = \{ V_1 + 2(V_3 + V_5), V_4 + 2(V_2 + V_6), V_1 + V_4 \}
    \end{gather*}
    and define $\xi_i: H_i \to \mathscr{P}$ for $i = 1, 2$ by
    \[
        \im \xi_1 = \{ 2 \}
         \quad \te{and} \quad \im \xi_2 = \{ 1 \}.
    \]
    Then, $(H_i, \xi_i)$ for $i = 1, 2$ are distinct decompositions of $V$ with respect to $\mathbf{IV}_1(\mathbf{R}^2)$. It shows that there may exist different types of decompositions for a varifold and the components need not to have constant density. Furthermore, the decompositions may fail to be unique even if $\|V\|$ has density $1$, see also \cite[6.13]{Menne16a_MR3528825}.
\end{example}


To prove the main theorem, the following a priori estimate is a key observation: under smallness conditions on the first variation, the weight measure on a ball has a positive lower bound. This will provide, locally, an upper bound of the number of varifolds in a decomposition; moreover, it also suggests a way to construct a decomposition.


\begin{lemma}[a priori estimate]
    \label{lemma:a priori estimate}
    Suppose $0<c<\infty$, $0<d<\infty$, $m, n \in \mathscr{P}$, $m \leq n$, $U$ is an open subset of $\mathbf{R}^n$, $a \in U$, $r > 0$, $\mathbf{B}(a, r) \subset U$, $V \in \mathbf{V}_m(U)$, $\|\updelta V\|$ is a Radon measure and
    \begin{gather*}
        \mathbf{\Theta}^{*m}(\|V\|, a) \geq d \\
        \|\updelta V\|\mathbf{B}(a, t) \leq c \boldsymbol{\alpha}(m)t^m \quad \te{for $0 < t < r$}.
    \end{gather*}
    Then, there holds
    \begin{equation*}
        \|V\| \mathbf{B}(a,r) \geq \boldsymbol{\alpha}(m)(d-c r)r^m.
    \end{equation*}
\end{lemma}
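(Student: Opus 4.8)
The plan is to deduce the estimate from Allard's monotonicity formula for varifolds of locally bounded first variation. I set
\[
    g(t) = \frac{\|V\|\,\mathbf{B}(a,t)}{\boldsymbol{\alpha}(m)\,t^{m}} \quad \text{for } 0 < t \leq r;
\]
since $\mathbf{B}(a,r) \subset U$ we have $\mathbf{B}(a,t) \subset U$, so $g$ is real valued, and $\limsup_{t \to 0+} g(t) = \mathbf{\Theta}^{*m}(\|V\|,a)$ by \cite[2.10.19]{Federer69_MR0257325}. Two features of the situation call for a little care: the monotonicity identity must be used in the generality where $\|\updelta V\|$ is merely a Radon measure, with no absolute continuity relative to $\|V\|$ --- which is exactly the hypothesis of \cite[5.1]{Allard72_MR307015}, any singular part of $\|\updelta V\|$ affecting only the error term --- and the hypothesis bounds an \emph{upper} density, so the inner radius will have to be sent to $0$ along a subsequence realising $\mathbf{\Theta}^{*m}(\|V\|,a)$.

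For fixed $0 < \sigma < r$ I would apply the monotonicity formula on the annulus $\mathbf{B}(a,r) \without \mathbf{B}(a,\sigma)$. It presents $g(r) - g(\sigma)$ as the sum of a manifestly non-negative term --- the integral over $\mathbf{B}(a,r) \without \mathbf{B}(a,\sigma)$, with respect to $\|V\|$, of the squared length of the part of $x - a$ normal to the approximate tangent plane of $V$ at $x$, divided by $\boldsymbol{\alpha}(m)|x-a|^{m+2}$ --- and an error term which, by its construction through radial test vector fields bounded in modulus by $|x - a|$ together with the estimate $|\updelta V(X)| \leq \int |X|\ud\|\updelta V\|$, is bounded below by
\[
    -\frac{1}{\boldsymbol{\alpha}(m)}\int_{\sigma}^{r} t^{-m-1}\left( \int_{\mathbf{B}(a,t)} |x - a|\ud\|\updelta V\| \right) \ud t \geq -\frac{1}{\boldsymbol{\alpha}(m)}\int_{\sigma}^{r} t^{-m}\,\|\updelta V\|\,\mathbf{B}(a,t)\ud t .
\]
Discarding the non-negative term and inserting the hypothesis $\|\updelta V\|\,\mathbf{B}(a,t) \leq c\,\boldsymbol{\alpha}(m)\,t^{m}$ for $0 < t < r$, the right-hand side is at least $-c(r - \sigma)$, so that
\[
    g(r) \geq g(\sigma) - c(r - \sigma) \quad \text{whenever } 0 < \sigma < r .
\]

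Finally I would let $\sigma \to 0+$: since $\limsup_{\sigma \to 0+} g(\sigma) = \mathbf{\Theta}^{*m}(\|V\|,a) \geq d$, the last display gives $g(r) \geq d - cr$, which is exactly $\|V\|\,\mathbf{B}(a,r) \geq \boldsymbol{\alpha}(m)(d - cr)r^{m}$ (and this remains correct, trivially, when $d \leq cr$). If the form of the monotonicity formula one cites is phrased for open balls, one first runs the argument with $\mathbf{U}(a,\rho)$ in place of $\mathbf{B}(a,r)$ for $\rho < r$ and lets $\rho \uparrow r$, which only reinforces the lower bound for $\|V\|\,\mathbf{B}(a,r)$; this is purely cosmetic. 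The hard part is really just having the monotonicity identity available with the error term controlled by $\|\updelta V\|$ alone rather than by a mean-curvature integrand; granting that, the remaining steps --- throwing away the gradient term, substituting the hypothesis, and passing to the limit in $\sigma$ --- are routine.
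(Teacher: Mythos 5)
Your proof is correct and takes essentially the same route as the paper's: both rest on the monotonicity identity for varifolds whose first variation is a Radon measure (the paper cites \cite[4.5, 4.6]{Menne16a_MR3528825}, you invoke Allard's version), bound the error term by $\int_{\sigma}^{r} t^{-m}\,\|\updelta V\|\,\mathbf{B}(a,t)\ud t \leq c\,\boldsymbol{\alpha}(m)(r-\sigma)$ using the radial-cutoff bound $|x-a|\leq t$, and then send the inner radius to $0$ against the upper density. Your extra care in realising the $\limsup$ along a subsequence is exactly what the paper's ``letting $s\to 0+$'' implicitly does, since the inequality holds for every inner radius.
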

\begin{proof}
    From \cite[4.5, 4.6]{Menne16a_MR3528825}, we have
    \begin{align*}
        s^{-m} \|V\| \mathbf{B}(a, s) 
        &\leq r^{-m} \|V\| \mathbf{B}(a, r)\\
        & \phantom{\leq} \ + { \textstyle\int_s^r t^{-m-1} \int_{\mathbf{B}(a, t)} (x-a) \bullet \upeta(V, x) \ud \|\updelta V\| \, x \ud \mathscr{L}^1 \, t }
    \end{align*}
    whenever $0 < s \leq r$; note that the last term is less than
    \begin{align*}
        { \textstyle\int_s^r t^{-m} \|\updelta V\| \mathbf{B}(a,t) \ud \mathscr{L}^1 \, t } \leq c \boldsymbol{\alpha}(m)(r-s)
    \end{align*}
    hence, letting $s \to 0+$,
    \begin{align*}
        r^{-m} \|V\| \mathbf{B}(a, r) \geq \boldsymbol{\alpha}(m) \mathbf{\Theta}^{*m}(\|V\|, a) - c \boldsymbol{\alpha}(m)r \geq \boldsymbol{\alpha}(m)(d-c r)
    \end{align*}
    which means $\|V\| \mathbf{B}(a, r) \geq \boldsymbol{\alpha}(m)(d-c r)r^m$.
\end{proof}


\begin{definition}
Suppose $m, n \in \mathscr{P}$, $m \leq n$, $U$ is an open subset of $\mathbf{R}^n$, and $P$ is an appropriate subset of $\mathbf{RV}_m(U)$. Then, $\Xi$ denotes the class of all functions $\xi$ such that 
\begin{gather*}
    \te{$\domain \xi$ is a finite subset of $P \without \{0\}$}, \quad \im \xi \subset \mathscr{P},\\
    \sum_{W \in \domain \xi} \xi(W) \| \updelta W \| = \| \updelta \mathbf{v}(\xi) \|,
\end{gather*}
where the map $\mathbf{v}: \Xi \to P$ is defined by 
\[
    \mathbf{v}(\xi)(f) = \sum_{W \in \domain \xi} \xi(W) W(f) \quad \te{whenever $f \in \mathscr{K}(U \times \mathbf{G}(n, m))$}.
\]
Furthermore, $\xi \in \Xi$ is called \textit{maximal with respect to a Borel set $B$} if and only if $\|W\|(B) > 0$ for all $W \in \domain \xi$ and 
\[
    \sum \xi \geq \sum \rho
\]
whenever $\rho \in \Xi$ satisfies
\[
    \mathbf{v}(\rho) = \mathbf{v}(\xi)
\]
and $\|X\|(B) > 0$ for all $X \in \domain \rho$. 
We say \textit{$W$ splits $V$ in $P$} if and only if $W \in P$, $V \in P$, $V - W \in P$, and $\|\updelta W\| + \|\updelta (V - W)\| = \|\updelta V\|$.
\end{definition}


\begin{theorem}
    \label{theorem:integral decomposition}
    Suppose $m, n \in \mathscr{P}$, $m \leq n$, $U$ is an open subset of $\mathbf{R}^n$, $P \subset \mathbf{RV}_m(U)$ is appropriate, $V \in P$ and $\|\updelta V\|$ is a Radon measure. Then, there exists a decomposition of $V$ with respect to $P$.
\end{theorem}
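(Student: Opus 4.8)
The plan is to construct the decomposition by a maximality argument driven by the a priori estimate \ref{lemma:a priori estimate}. First I would fix an exhaustion $\{K_j\}$ of $U$ by compact sets with $K_j \subset \interior K_{j+1}$, and observe that since $\|\updelta V\|$ is a Radon measure, for each $j$ and each $a \in K_j$ one can find a radius $r(a) > 0$ with $\mathbf{B}(a, r(a)) \subset U$ and $\|\updelta V\|\mathbf{B}(a, t) \leq \frac{1}{2}\boldsymbol{\alpha}(m) t^m$ for $0 < t < r(a)$ (the density of $\|\updelta V\|$ with respect to $\mathscr{H}^m$ need not be finite, but one uses that $\|\updelta V\|$ has no atoms or, more carefully, shrinks $r(a)$ using dominated convergence as in the proof of \ref{lemma:closedness}). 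By \ref{lemma:a priori estimate} applied with $d = 1$, any $W \in P \without \{0\}$ with $\mathbf{\Theta}^{*m}(\|W\|, a) \geq 1$ satisfies $\|W\|\mathbf{B}(a, r) \geq \boldsymbol{\alpha}(m)(1 - cr)r^m$ for small $r$; covering $K_j$ by finitely many such balls $\mathbf{B}(a_i, r_i)$ on which the lower bound is a fixed positive constant $\varepsilon_j$, and using property (2) of appropriateness ($\mathbf{\Theta}^m(\|W\|, \cdot) \geq 1$ $\|W\|$-a.e.), every nonzero $W \in P$ with $\|W\|(K_j) > 0$ has $\|W\|(\interior K_{j+1}) \geq \varepsilon_j$.

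The consequence is the crucial finiteness statement: if $\xi \in \Xi$ (the class of finitely-supported integer-weighted families introduced in the Additional Notation) satisfies $\mathbf{v}(\xi) \leq V$ in the sense that $\sum_{W \in \domain \xi}\xi(W)\|W\| \leq \|V\|$, then $\sum_{W \in \domain \xi, \|W\|(K_j) > 0} \xi(W) \leq \varepsilon_j^{-1} \|V\|(\interior K_{j+1}) < \infty$. Hence for each Borel set $B$ with $\|V\|(B) > 0$ contained in some $K_j$ there exists $\xi \in \Xi$ with $\mathbf{v}(\xi) \leq V$ that is maximal with respect to $B$ (in the sense defined in the excerpt): one takes a supremum of $\sum \rho$ over the relevant $\rho$, which is a bounded set of integers, and realizes it using a compactness/diagonal argument — here I would use that $P$ is strongly closed (\ref{definition:appropriate}(3)) together with \ref{lemma:closedness} and \ref{lemma:absolute continuity} to pass to a limit, noting that the constraint $\sum_{W}\xi(W)\|\updelta W\| = \|\updelta \mathbf{v}(\xi)\|$ is preserved under the strong limit by the lower semicontinuity of total variation combined with the reverse inequality $\|\updelta \mathbf{v}(\xi)\| \leq \sum \xi(W)\|\updelta W\|$ which always holds.

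Next I would iterate over the exhaustion. Start with $B_1$ a subset of $K_1$ of positive $\|V\|$ measure and take $\xi_1$ maximal with respect to $B_1$; I claim each $W \in \domain \xi_1$ is then a component of $V$ with respect to $P$, i.e.\ indecomposable with respect to $P$ and satisfying $\|\updelta V\| = \|\updelta W\| + \|\updelta(V - W)\|$ — the first because any splitting $W = W' + W''$ with $\|W'\|(B_1) > 0$ would let us replace $W$ by $W', W''$ and increase $\sum \xi_1$, contradicting maximality (and if $\|W'\|(B_1) = 0$ then since $W'$ is supported where $\|V\|$ and the a priori estimate force a positive contribution on some later $K_j$, one reshuffles $B_1$; this reshuffling is the delicate bookkeeping point), and the second by splitting off $\mathbf{v}(\xi_1)$ from $V$ and verifying the first-variation additivity using that each $W$ splits $V$. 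Then pass to $V - \mathbf{v}(\xi_1)$ restricted appropriately, choose $B_2 \subset K_2$ witnessing the remaining mass, extend to $\xi_2 \supset \xi_1$ maximal with respect to $B_2$, and continue. Taking $H = \bigcup_i \domain \xi_i$ and $\xi = \lim \xi_i$ (which is consistent by the nesting), the finiteness bound shows $H$ restricted to each $K_j$ is finite, so $H$ is countable; property \ref{definition:decomposition 2} follows from $\sum_{W \in H}\xi(W)\|W\| = \|V\|$, which holds because any leftover positive mass in some $K_j$ would, by the a priori estimate and appropriateness, support a nonzero indecomposable $W' \in P$ contradicting maximality of $\xi_j$; and \ref{definition:decomposition 3} follows by summing the first-variation splittings, using \ref{remark:interchange restriction and total variation} and monotone convergence.

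The main obstacle I anticipate is the reshuffling step in the iteration: maximality with respect to a fixed Borel set $B \subset K_j$ controls only those components charging $B$, so to guarantee that \emph{every} eventual member of $H$ is genuinely indecomposable (not merely indecomposable "as seen from $B$") one must carefully enlarge the test set $B$ along the exhaustion and argue that the maximal value cannot be further increased on any $K_j$ — equivalently, that the process terminates locally. Making this precise, and simultaneously verifying that the first-variation additivity $\|\updelta V\| = \sum \xi(W)\|\updelta W\|$ survives the countable limit (rather than degrading to an inequality), is where the real work lies; the topological inputs \ref{lemma:absolute continuity} and \ref{lemma:closedness} are exactly what make the limit of maximal families again admissible.
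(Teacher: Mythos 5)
Your overall strategy --- use the a priori estimate \ref{lemma:a priori estimate} together with appropriateness (2) to get a local lower mass bound, deduce that only finitely many weighted components can charge a fixed bounded Borel set, and then take finite families in $\Xi$ that are maximal with respect to sets exhausting $U$ --- is exactly the skeleton of the paper's proof. But the obstacle you flag at the end is not a bookkeeping point; it is the place where your scheme, as described, actually fails, and the paper's mechanism for overcoming it is absent from the proposal. You freeze each maximal family ($\xi_{j+1}\supset\xi_j$, so members of $\domain\xi_1$ are never revisited). A varifold $W\in\domain\xi_1$ maximal with respect to $B_1$ can perfectly well admit a splitting $W=W'+W''$ in which $\|W'\|(B_1)=0$; such a $W$ ends up in $H$ and is decomposable, violating Definition \ref{definition:decomposition}(1). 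Enlarging the test set does not rescue this: a $\xi$ maximal with respect to $\bigcup_j B_j$ need not exist, since the supremum of $\sum\rho$ is no longer finite once the test set is unbounded. The paper resolves this by never freezing anything: it builds a tree in which \emph{every} piece $Z_i$ obtained at stage $i$ is re-decomposed via a family $c_i(Z_i)$ maximal with respect to $A_i$, and the members of $H$ are only the limits $\lim_i Z_i$ along infinite branches $Z$ with $\lim Z\neq 0$. Indecomposability of $\lim Z$ then follows because any splitting of $\lim Z$ must eventually charge some $A_i$ on both pieces and contradicts maximality of $c_i(Z_i)$. This infinite-refinement idea is the missing ingredient, not a refinement of yours.

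The second genuine gap is the equality in conditions (2) and (3) of Definition \ref{definition:decomposition}. Your argument --- leftover mass would contradict maximality --- is not a proof: a nonzero remainder $V-\sum_{W\in H}\xi(W)W$ need not contain a nonzero member of $P$ that splits $V$ (i.e.\ one satisfying the first-variation additivity), so it cannot simply be appended to $\xi$; and in the correct (tree) construction the danger is instead that mass escapes along branches with $\lim Z=0$. The paper excludes this with a pointwise density computation: using that densities of members of $P$ are $0$ or $\geq 1$, only finitely many branches charge a given point $x$, the restriction maps $R_j$ eventually separate them, and one obtains $\sum_{Z}\nu(Z)\,\mathbf{\Theta}^m(\|\lim Z\|,x)=\mathbf{\Theta}^m(\|V\|,x)$ for $\mathscr{H}^m$ almost all $x$, whence $\sum_Z\nu(Z)\|\lim Z\|=\|V\|$ and then the tangent planes and the first variations match. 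This counting argument (together with the rearrangement identity \cite[2.1.1(9)]{Federer69_MR0257325}) is the technical heart of the theorem and is only asserted, not supplied, in your proposal. The compactness/strong-closedness machinery you invoke for the existence of maximal families is, by contrast, unnecessary: since $\sum\rho$ ranges over a bounded set of positive integers, a maximizer exists trivially.
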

\begin{proof}
    Assume $V \neq 0$. Define $\delta_i = \boldsymbol{\alpha}(m)2^{-m-1}i^{-m}$, $\varepsilon_i = 2^{-1}i^{-1}$ for $i \in \mathscr{P}$ and let $A_i$ denote the the Borel set of $a \in \mathbf{R}^n$ satisfying
    \begin{gather*}
        |a| \leq i, \quad \mathbf{U}(a, 2 \varepsilon_i) \subset U, \quad 1 \leq \mathbf{\Theta}^m(\|V\|, a) < \infty\\
        \| \updelta V \|\mathbf{B}(a, r) \leq \boldsymbol{\alpha}(m) i r^m \quad \te{for $0 < r < \varepsilon_i$}
    \end{gather*}
    whenever $i \in \mathscr{P}$. Clearly, $A_i \subset A_{i+1}$ for $i \in \mathscr{P}$ and $\|V\|(U \without \bigcup_{i = 1}^\infty A_i) = 0$ by \cite[3.5 (1a)]{Allard72_MR307015} and \cite[2.8.18, 2.9.5]{Federer69_MR0257325}. For each $i \in \mathscr{P}$, we infer from \ref{lemma:a priori estimate} that
    \begin{gather*}
        \|W\| \mathbf{B}(a, \varepsilon_i) \geq \delta_i \quad \te{whenever $a \in A_i$ and $W \in P$ satisfy} \\
        \te{$W \leq V$, $\|\updelta W\| \leq \|\updelta V\|$, and $\mathbf{\Theta}^{*m}(\|W\|, a) \geq 1$}
    \end{gather*}
    and hence
    \begin{align*}
        \delta_i \sum \xi
        &\leq \sum_{W \in \domain \xi} \xi(W) \|W\| \{ x \with \dist(x, A_i) \leq \varepsilon_i \} \\
        &= \|\mathbf{v}(\xi)\| \{ x \with \dist(x, A_i) \leq \varepsilon_i \} < \infty
    \end{align*}
    whenever $\xi \in \Xi$ satisfies $\|W\|(A_i) > 0$ for all $W \in \domain \xi$, $\mathbf{v}(\xi) \leq V$, and $\|\updelta \mathbf{v}(\xi)\| \leq \|\updelta V\|$.
    
    Since $V \neq 0$, there exists a $\lambda \in \mathscr{P}$ such that $\|V\|(A_{\lambda}) > 0$. From now on, we replace $A_i$ by $A_{i + \lambda}$ for $i \in \mathscr{P}$. Let
    \begin{gather*}
        R = P \cap \{ W \with W \leq V, \|\updelta W\| \leq \|\updelta V\| \}, \\
        P_i = R \cap \{ W \with \|W\|(A_i) > 0 \} \quad \te{whenever $i \in \mathscr{P}$}.
    \end{gather*}
    Then, we may select functions $c_i: P_i \to \Xi$ such that $\mathbf{v}(c_i(W)) = W$ and $c_i(W)$ is maximal with respect to $A_i$; in particular, $\domain c_i(W) \subset P_i$ whenever $W \in P_i$.
    
    From now on, we will use the convention that $\infty \cdot 0 = 0$. Let $\Sigma$ be the class of all sequences $Z_1, Z_2, Z_3, \dotsc$ in $P$ satisfying $Z_1 = V$ and $Z_{i+1} \in \domain c_i(Z_i)$ and abbreviate $\lim Z = \lim_{i \to \infty} Z_i$ for $Z \in \Sigma$. 
    Let $C = \Sigma \cap \{ Z \with \lim Z \neq 0 \}$ and define $\nu: \Sigma \to \mathscr{P} \cup \{\infty\}$ by
    \[
        \nu(Z) = \prod_{i=1}^\infty c_i(Z_i)(Z_{i+1}).
    \]
    Note that for $Z \in \Sigma$, and $i, j \in \mathscr{P}$ with $i \leq j$, we have 
    \[
        Z_i \geq \prod_{k=i}^j c_k(Z_k)(Z_{k+1})Z_{j+1},
    \]
    hence
    \[
        V \geq \prod_{k=1}^j c_k(Z_k)(Z_{k+1})Z_{j+1} \geq \nu(Z) \lim Z;
    \]
    thus, $\im (\nu | C) \subset \mathscr{P}$. 
    
    Now, we aim to prove $C$ is countable, 
    \[
        \sum_{Z \in C} \nu(Z) \lim Z \leq V, \quad \te{and} \quad \sum_{Z \in C} \nu(Z) \|\updelta \lim Z\| \leq \|\updelta V\|.
    \]
    For $i \in \mathscr{P}$, if $i = 1$, let $F_1 = \{ V \}$, and if $i \geq 2$, let $F_i$ be the set of all finite sequences $Z_1, Z_2, \dotsc, Z_i$ such that $Z_{j+1} \in \domain c_j(Z_j)$ for $1 \leq j \leq i-1$. We define, for $i \in \mathscr{P}$, the restriction map
    \[
        R_i: \Sigma \cup \bigcup_{i < j \in \mathscr{P}} F_j \to F_i, \quad R_i(Z) = Z| \mathscr{P} \cap \{ k \with 1 \leq k \leq i \}.
    \]
    Observe that whenever $i \in \mathscr{P}$, $Y \in F_i$, and $F$ is a finite subset of $\Sigma$ satisfying $R_i[F] = \{ Y \}$, there exists $j \in \mathscr{P}$ such that $j > i$ and $R_j|F$ is injective. Therefore, we have
    \begin{align}
        \label{inequality: comparison1-1}
        \begin{split}
        \sum_{Z \in F} \nu(Z) \lim Z 
        &\leq \sum_{Z \in F} \prod_{k=1}^{j-1} c_k(Z_k)(Z_{k+1})Z_j \\
        &\leq \sum_{X \in F_j, R_i(X) = Y} \prod_{k=1}^{j-1} c_k(X_k)(X_{k+1})X_j \\
        &\leq \prod_{k=1}^{i-1} c_k(Y_k)(Y_{k+1})Y_i.
        \end{split}
    \end{align}
    Similarly, we have
    \begin{equation}
        \label{inequality: comparison1-2}
        \sum_{Z \in F} \nu(Z) \|\updelta \lim Z\| \leq \prod_{k=1}^{i-1} c_k(Y_k)(Y_{k+1}) \|\updelta Y_i\|.
    \end{equation}
    Choosing countable dense subsets of $\mathscr{K}(U \times \mathbf{G}(n, m))^+$ and $\mathscr{K}(U)^+$, we conclude from \eqref{inequality: comparison1-1}, \eqref{inequality: comparison1-2} with $i = 1$ and \cite[2.1.1(3)]{Federer69_MR0257325} that $C$ is countable and
    \begin{gather}
        \label{inequality: comparison2}
        \begin{split}
            \sum_{Z \in \Sigma} \nu(Z) \lim Z &\leq V, \\
            \sum_{Z \in \Sigma} \nu(Z) \|\updelta \lim Z\| &\leq \|\updelta V\|.
        \end{split}
    \end{gather}
    
    Next, we will prove the equalities in \eqref{inequality: comparison2} hold. Let $B$ consist of all $x \in \bigcup_{i=1}^\infty A_i$ such that
    \begin{gather*}
        \mathbf{\Theta}^m(\|W\|, x) \in \{ 0 \} \cup ( \mathbf{R} \cap \{ t \with 1 \leq t \} ), \\
        \mathbf{\Theta}^m(\|\lim Z\|, x) = \lim_{i \to \infty} \mathbf{\Theta}^m(\|Z_i\|, x)
    \end{gather*}
    whenever $W \in C \cup \bigcup_{Z \in C} \im Z$ and $Z \in C$. Note that
    \[
        \mathscr{H}^m( \{ x \with \mathbf{\Theta}^m(\|V\|, x) > 0 \} \without B ) = 0
    \]
    by \cite[3.5(1b)]{Allard72_MR307015} since $P$ is appropriate and $C \cup \bigcup_{Z \in C} \im Z$ is countable. For $x \in B$ and $Z \in \Sigma$, we abbreviate $\Theta_Z(x) = \lim_{i \to \infty} \mathbf{\Theta}^m(\|Z_i\|, x)$. Let $x \in B$. Similarly as \eqref{inequality: comparison2}, we have
    \[
        \sum_{Z \in \Sigma} \nu(Z) \Theta_Z(x) \leq \mathbf{\Theta}^m(\|V\|, x) < \infty;
    \]
    in particular, $E_x = \Sigma \cap \{ Z \with \Theta_Z(x) > 0 \}$ is a finite subset of $C$ because either $\Theta_Z(x) = 0$ or $\Theta_Z(x) \geq 1$, and $\nu(Z) \geq1$ for $Z \in \Sigma$, and $B \subset \bigcup_{i=1}^\infty A_i$. Accordingly, there exists $i \in \mathscr{P}$ such that $R_j | E_x$ is injective for $i < j \in \mathscr{P}$. Observe that
    \[
        R_j[E_x] = F_j \cap \{ Z \with \mathbf{\Theta}^m(\|Z_j\|, x) > 0 \} \quad \te{whenever $i < j \in \mathscr{P}$},
    \]
    and that
    \[
        \sum_{Z \in F_j} \prod_{k = 1}^{j-1} c_k(Z_k)(Z_{k+1}) \mathbf{\Theta}^m(\|Z_j\|, x) = \mathbf{\Theta}^m(\|V\|, x) \quad \te{whenever $i < j \in \mathscr{P}$};
    \]
    letting $j \to \infty$, we conclude
    \begin{equation}
        \label{equation: density equation}
        \sum_{Z \in E_x} \nu(Z) \mathbf{\Theta}^m(\|\lim Z\|, x) = \mathbf{\Theta}^m(\|V\|, x).
    \end{equation}
    If $x \in U$ satisfies $\mathbf{\Theta}^m(\|V\|, x) = 0$, then \eqref{equation: density equation} is trivial. Therefore, \eqref{equation: density equation} holds for $\mathscr{H}^m$ almost all $x \in U$ and we have
    \[
        \sum_{Z \in C} \nu(Z) \|\lim Z\| = \|V\|;
    \]
    it follows that $\tangent^m(\|\lim Z\|, x) = \tangent^m(\|V\|, x)$ whenever $x \in B$ and $Z \in E_x$, hence
    \begin{gather*}
        \sum_{Z \in C} \nu(Z) \lim Z = V, \\
        \sum_{Z \in C} \nu(Z) \|\updelta \lim Z\| = \|\updelta V\|.
    \end{gather*}
    
    Now, we will prove that for $i \in \mathscr{P}$ and $Y \in F_i$, there holds
    \[
        \sum_{Z \in C, R_i(Z) = Y} \nu(Z) \lim Z = \prod_{k=1}^{i-1} c_k(Y_k)(Y_{k+1})Y_i,
    \]
    The case $i = 1$ is treated in the preceding paragraph. Suppose the equation holds for some $i \in \mathscr{P}$. From \eqref{inequality: comparison1-1} we have
    \begin{gather*}
        \sum_{Z \in C, R_{i+1}(Z) = X} \nu(Z) \lim Z \leq \prod_{k=1}^{i} c_k(X_k)(X_{k+1}) X_{i+1}, \\
        \sum_{W \in F_{i+1}, R_i(W) = Y} \prod_{k=1}^{i} c_k(W_k)(W_{k+1}) W_{i+1} \leq \prod_{k=1}^{i-1} c_k(Y_k)(Y_{k+1}) Y_{i}
    \end{gather*}
    whenever $X \in F_{i+1}$ and $Y \in F_i$. Thus, by \cite[2.1.1(9)]{Federer69_MR0257325}, we have
    \begin{align*}
        \prod_{k=1}^{i-1} c_k(Y_k)(Y_{k+1}) Y_{i}
        &=\sum_{Z \in C, R_i(Z) = Y} \nu(Z) \lim Z \\
        &= \sum_{X \in F_{i+1}, R_i(X) = Y} \sum_{Z \in C, R_{i+1}(Z) = X} \nu(Z) \lim Z \\
        &\leq \sum_{X \in F_{i+1}, R_i(X) = Y} \prod_{k=1}^{i} c_k(X_k)(X_{k+1}) X_{i+1} \\
        &\leq \prod_{k=1}^{i-1} c_k(Y_k)(Y_{k+1}) Y_{i}
    \end{align*}
    whenever $Y \in F_i$, which forces
    \[
        \sum_{Z \in C, R_{i+1}(Z) = X} \nu(Z) \lim Z = \prod_{k=1}^{i} c_k(X_k)(X_{k+1}) X_{i+1}
    \]
    whenever $X \in F_{i+1}$. Similarly, we also have
    \[
        \sum_{Z \in C, R_i(Z) = Y} \nu(Z) \|\updelta \lim Z\| = \prod_{k=1}^{i-1} c_k(Y_k)(Y_{k+1}) \|\updelta Y_i\|
    \]
    whenever $i \in \mathscr{P}$ and $Y \in F_i$; in particular, $\lim Z$ splits $Z_i$ in $P$ whenever $Z \in C$ and $i \in \mathscr{P}$.
    
    Finally, we will prove $\lim Z$ is indecomposable with respect to $P$ whenever $Z \in C$. If it were not the case, there exists $W \in P \without \{0\}$ such that $W \leq \lim Z$, $\lim Z - W \in P \without \{0\}$, and $\|\updelta \lim Z\| = \|\updelta W\| + \|\updelta (\lim Z - W)\|$. We may choose $i$ large enough such that $\|W\|(A_i) > 0$ and $\|\lim Z - W\|(A_i) > 0$, since $\|W\| + \|\lim Z - W\| \leq \|V\|$. Then, $W$ splits $Z_{i+1}$, $\|W\|(A_i) > 0$ and $\|Z_{i+1} - W\|(A_i) \geq \|\lim Z - W\|(A_i) > 0$, a contradiction to that $c_i(Z_i)$ is maximal with respect to $A_i$.
    
    Let $H = \{ \lim Z \with Z \in C \}$ and define $\xi(W) = \sum_{C \in Z, \lim Z = W} \nu(Z)$, then $(H, \xi)$ is a decomposition of $V$ with respect to $P$.
\end{proof}


\begin{remark}
    The structure of the proof of \ref{theorem:integral decomposition} is similar to the one of \cite[6.12]{Menne16a_MR3528825}. However, since we allow to decompose varifolds not just by restriction to subsets, we should take the multiplicity of varifolds into account which makes it much more complicated to verify the condition \ref{definition:decomposition}\eqref{definition:decomposition 2}.
\end{remark}


\medskip

\bibliographystyle{alpha}
\bibliography{ref}

\medskip \noindent \textsc{Affiliation}

\noindent
Department of Mathematics \\
National Taiwan Normal University \\
No.88, Sec.4, Tingzhou Rd. \\
Wenshan Dist., \textsc{Taipei City 116059 \\
	Taiwan (R.O.C.)}

\medskip \noindent \textsc{Email address}

\medskip \noindent
\href{mailto:hsinchuangchou@gmail.com}{hsinchuangchou@gmail.com}

\end{document}